\documentclass[11pt]{article}
\usepackage{CJK}
\usepackage{amsmath, amsfonts, amssymb, amsthm, amscd, amsbsy}
\usepackage{mathrsfs}
\usepackage{fancyhdr}
\usepackage{geometry}
\usepackage{graphicx}
\usepackage{enumerate}
\usepackage{indentfirst}
\usepackage{color}
\usepackage{hyperref,amssymb,cite}
\usepackage{cite}
\usepackage[misc,geometry]{ifsym}
\providecommand{\U}[1]{\protect\rule{.1in}{.1in}}
\hypersetup{
colorlinks=true,
linkcolor=blue,
anchorcolor=blue,
citecolor=blue}
\geometry{
a4paper,
left=1in,
right=1in,
top=1in,
bottom=1in,
}
\theoremstyle{plain}
\newtheorem{theorem}{Theorem}[section]

\newtheorem{lemma}[theorem]{Lemma}

\numberwithin{equation}{section}
\newtheorem{theoremalph}{Theorem}

\newcommand{\HT}{\CJKfamily{hei}}

\newcommand{\R}{\mathbb{R}}
\newcommand{\N}{\mathbb{N}}

\begin{document}

\title{\HT { Existence of extremal functions and Wulff symmetry for  anisotropic Trudinger-Moser inequalities }}
\author{\small Kaiwen  Guo, \quad  \small  Yanjun Liu }
\date{ }
\maketitle
%\footnote[0]{*Corresponding author.}\\
\noindent{\bf Abstract:} In this paper, we investigate the extremal functions  for  anisotropic Trudinger-Moser inequalities. Our method uses convex symmetrization, the continuity of the supremum function, together with the relation between the supremums of the subcritical  and the critical anisotropic Trudinger-Moser inequality, finally, we give some results of existence and symmetry about the extremal functions for several different types of anisotropic Trudinger-Moser inequalities. \\
\noindent{\bf Keywords:}  Trudinger-Moser inequalities; Anisotropy and singularity;  Wulff symmetric function;  Existence of maximizers\\
\noindent{\bf MSC2020:} 35J60, 35B33, 46E35
\section{Introduction and main results}\label{section1}
As is known to all, in the limiting case, the Sobolev embeddings are replaced by the Trudinger-Moser inequalities. Let $\Omega$ be a bounded domain in $\R^{N}$, $N\geq2$, Trudinger \cite{Trudinger-1967} (see also  Poho\v{z}aev \cite{Pohozaev-1965} and Yudovi\v{c} \cite{Yudovic-1961}) proved that $W^{1,N}_{0}(\Omega)$ is embedded in the Orlicz space $L_{\varphi_{\alpha}}(\Omega)$ which is determined by Young function $\varphi_{\alpha}(t)=e^{\alpha\lvert t\rvert^{\frac{N}{N-1}}}-1$ for some positive number $\alpha$. In particular, Moser \cite{Moser-1970/71} obtained the sharp constant  $\alpha_{N}=N^{\frac{N}{N-1}}\omega_{N}^{\frac{1}{N-1}}$ such that
\begin{equation}\label{eq1.1}
\sup_{u\in W_0^{1,N}(\Omega),\;\|\nabla u\|_{N}\leq1}\int_{\Omega}e^{\alpha_{N}|u|^{\frac{N}{N-1}}}\;\mathrm{d}x<+\infty,
\end{equation}
where $\omega_{N}$ is the volume of
the unit ball in $\mathbb{R}^{N}$.

The Trudinger-Moser inequality on domains with infinite volume are taken the
following forms.
 Let
$$
\phi_{N}(t)=e^{t}-\sum\limits_{j=0}^{N-2}\frac{t^{j}}{j!}
$$
and $\alpha\in(0,\alpha_{N})$, then
\begin{align}
    \sup\limits_{\|\nabla u\|_{N}\leq 1}\frac{1}{\|u\|_{N}^{N}}\int_{\R^{N}}\phi_{N}(\alpha\lvert u\rvert^{\frac{N}{N-1}})\;\mathrm{d}x<+\infty,\label{eq1.2}
\end{align}
the subcritical inequality \eqref{eq1.2} was established by Cao \cite{Cao-1992} in dimension two, do \'{O} \cite{do-1997} and Adachi-Tanaka \cite{Adachi-Tanaka-2000} in high dimension. For critical case,  by replacing the Dirichlet norm with the standard Sobolev
norm in $W^{1, N}(\R^{N})$, it holds
\begin{align}
    \sup\limits_{\|\nabla u\|_{N}^{N}+\|u\|_{N}^{N}\leq 1}\int_{\R^{N}}\phi_{N}(\alpha_N\lvert u\rvert^{\frac{N}{N-1}})\;\mathrm{d}x<+\infty,\label{eq1.3}
\end{align}
which was established by Ruf \cite{Ruf-2005}(for the case $N=2$) and Li-Ruf \cite{Li-Ruf-2008}(for the general case $N\geq2$). It is interesting to notice that the Trudinger-Moser type inequality can only be established for the subcritical case when  the term $\|\nabla u\|_{N}$ is  used in restriction of function class. Indeed, \eqref{eq1.2} has been proved in \cite{do-1997} and \cite{Adachi-Tanaka-2000} if $\alpha<\alpha_{N}$. Futhermore, their conclusions are actually sharp in the sense that the supremum is infinity when $\alpha\geq\alpha_{N}$. In order to achieve the critical case $\alpha=\alpha_{N}$, Ruf \cite{Ruf-2005} and then Li-Ruf \cite{Li-Ruf-2008} need to use the full form in $W^{1,N}(\R^{N})$, namely, $(\|u\|_{N}^{N}+\|\nabla u\|_{N}^{N})^{\frac{1}{N}}$. They also obtained that $\alpha_{N}$ is sharp without accident. One should note that all these results rely on the classic P\'{o}lya-Szeg\"{o} inequality and the Schwartz symmetrization argument, while the P\'{o}lya-Szeg\"{o} inequality  fails in other non-Euclidean settins such as the Heisenberg group, Riemannian manifolds  and  high order Sobolev spaces.  An alternative proof of \eqref{eq1.3} without using symmetrization has been given by Lam-Lu \cite{Lam-Lu-2013}. Different proofs of \eqref{eq1.2} and \eqref{eq1.3} have also been given without using symmetrization in settings such as on the Heisenberg group or high and fractional order Sobolev space where the symmetrization is not available (see the work in Lam-Lu \cite{Lam-Lu-2012} and Lam-Lu-Tang \cite{Lam-Lu-Tang-2014} which extend the earlier work by Cohn-Lu \cite{Cohn-Lu-2001} on finite domains). Such symmetrization-free method is also used to establish the Trudinger-Moser inequalities on the Riemannian manifolds by Li-Lu \cite{Li-Lu-2021} and on the Heisenberg group by Li-Lu-Zhu \cite{Li-Lu-Zhu-2018}. These represent some important progress in the subject of the study of Trudinger-Moser inequalities. Recently,  the inequality \eqref{eq1.3} has been essentially improved by Chen-Lu-Zhu \cite{Chen-Lu-Zhu-2021, Chen-Lu-Zhu-2021'} through replacing $\int_{\mathbb
{R}^{N}}(\lvert\nabla u\rvert^{N}+\lvert u\rvert^{N})\;\mathrm{d}x$ with $\int_{\mathbb
{R}^{N}}(\lvert\nabla u\rvert^{N}+V(x)\lvert u\rvert^{N})\;\mathrm{d}x$, where the potential $V(x)$ is non-negative and is assumed only having the lower bound at infinity.
 It can help to remove the positive lower bound assumption on potential when studying the ground state solution for Schr\"{o}dinger equation with the Trudinger-Moser growth.
 This improved Trudinger-Moser inequality and corresponding ground-state solution results of Schr\"{o}dinger equation has also been generalized to Heisenberg group in Chen-Lu-Zhu \cite{Chen-Lu-Zhu-2023'}
 and Adams case in Chen-Lu-Zhu \cite{Chen-Lu-Zhu-2023}. By Fourier-rearrangement-free argument, constructing concentration-compactness and suitable comparison principle for Trudinger-Moser type equation, Sun-Song-Zou\cite{Sun-Song-Zou} also obtianed  a sharp Trudinger-Moser type inequality involving $L^p$
norm  with degenerate potential.

Moreover, Ibrahim-Masmoudi-Nakanishi \cite{Ibrahim-Masmoudi-Nakanishi-2015}  obtained Trudinger-Moser inequalities with the exact growth in in dimension two and
Masmoudi--Sani \cite{Masmoudi-Sani-2015}  obtained the similar results in general dimension, the same type of exact growth inequality was proved even in hyperbolic spaces, see\cite{Lu-Tang-2016}. These  inequalities  play an important role in geometric analysis and partial differential equations, we refer  to \cite{Chang-Yang-2003,de- Figueiredo-Miyagaki-Ruf-1995, Adimurthi-Yang-2010, Lam-Lu-2014, do-O-Souza-de- Medeiros-Severo-2014, Masmoudi-Sani-2015} and references therein. In \cite{Lam-Lu-Zhang-2017},   Lam-Lu-Zhang  provide a precise relationship between  subcritical and
critical  Trudinger-Moser inequalities. The similar result in Lorentz-Sobolev norms was also proved
by Lu-Tang \cite{Lu-Tang-2016'}.
Trudinger-Moser inequality for first order derivatives was extended to high order derivatives by D. Adams \cite{Adams-1988} for
bounded domains when  dimension $N\geq2$. Ruf-Sani \cite{ Ruf-Sani-2013} studied the Adams type inequality with
higher derivatives of even orders for unbounded domains in $\mathbb{R}^{N}$. Adams inequalities with the exact growth were established by  Masmoudi-Sani\cite{Masmoudi-Sani-2014} in dimension four and then established in general dimension by Lu-Tang-Zhu\cite{Lu-Tang-Zhu-2015}(see \cite{Masmoudi-Sani-2018} for higher order case), some existence results of extremal functions can refer to \cite{Chen-Lu-Zhu-2020}, moreover, in
\cite{Zhang-Yang-Cheng-Zhou-2025}, the authors extended the results of \cite{Masmoudi-Sani-2014} to the anisotropic case.

One important problem on Trudinger-Moser inequality is whether extremal functions exist or not. The existence of extremal functions for inequality \eqref{eq1.1} was firstly obtained by Carleson-Chang \cite{Carleson-Chang-1986} when $\Omega$ is the unit ball, then obtained by Struwe \cite{Struwe-1988} when $\Omega$ is close to the unit ball in the sense of measure, finally obtained by Flucher \cite{Flucher-1992} and Lin \cite{Lin-1996} when $\Omega$ is a smooth bounded domain.
Recently,
 the author of \cite{Chen-Jiang-Lu-Zhu-2025} founds the effect of sharp $L^{p}$ perturbation
 on the existence and non-existence for extremals of Trudinger-Moser inequality in two-dimensional bounded domain. And in fact, effect of polynomial perturbation on the existence and
 non-existence of extremals for critical Moser-Trudinger inequality in $\mathbb{R}^{2}$
 and Adams inequality in $\mathbb{R}^{4}$ or $\mathbb{R}^{2m}$ for $m>2$ was earlier systematically studied in \cite{Chen-Lu-Zhu-2020} and \cite{Chen-Lu-Zhu-2022}. The existence of extremal functions for inequality \eqref{eq1.3} was obtained by Ruf \cite{Ruf-2005}, Li-Ruf \cite{Li-Ruf-2008} and Ishiwata \cite{Ishiwata-2011}. On compact Riemannian manifolds, using the blow-up
analysis of the Euler--Lagrange equations, Li established the existence
of extremal functions for the Trudinger--Moser inequalities, see \cite{Li-2001,Li-2005,Li-2006}.  Recently based on the work by Malchiodi-Martinazzi \cite{Malchiodi-Martinazzi-2014}, Mancini-Martinazzi \cite{Mancini-Martinazzi-2017} reproved the Carleson-Chang's result by using a new method based on the Dirichlet energy, also allowing
for perturbations of the functional. Besides, the author of \cite{Chen-Lu-Xue-Zhu-2022} established the uniqueness for local Moser-Trudinger
 equation and it is an important step towards solving the uniqueness of extremal function of Trudinger-Moser inequality in disk.

There are also several other related expansions of the Trudinger-Moser inequality.
When $\Omega$ contains the origin, Adimurthi-Sandeep \cite{Adimurthi-Sandeep-2007} generalized inequality \eqref{eq1.1} to a singular version, namely, for any $0<\beta<N$,
\begin{align}
    \sup\limits_{u\in W_{0}^{1,N}(\Omega),\;\|\nabla u\|_{N}\leq 1}
    \int_{\Omega}\frac{e^{\alpha_{N}(1-\frac{\beta}{N})\lvert u\rvert^{\frac{N}{N-1}}}}{\lvert x\rvert^{\beta}}\;\mathrm{d}x<+\infty,\notag
\end{align}
  for extremal functions of singular version, Csat\'{o}-Roy \cite{Csato-Roy-2015} proved that extremal functions exist in bounded domain of  two dimension, and the rusult of general dimension by  Csat\'{o}-Roy-Nguyen\cite{Csato-Nguyen-Roy-2021}.
Adimurthi-Yang \cite{Adimurthi-Yang-2010} generalized the inequality to $\R^{N}$ as follows:
\begin{align}\label{eq1.4}
    \sup\limits_{u\in W^{1,N}(\R^{N}),\;\int_{\R^{N}}(\lvert\nabla u\rvert^{N}+\lvert u\rvert^{N})\;\mathrm{d}x\leq 1}
    \int_{\R^{N}}\frac{\Phi_{N}(\alpha_{N}(1-\frac{\beta}{N})\lvert u\rvert^{\frac{N}{N-1}})}{\lvert x\rvert^{\beta}}\;\mathrm{d}x<+\infty,
\end{align}
and it was proven in Li-Yang \cite{Li-Yang-2018} that the supremum can be attained. Lam-Lu-Zhang \cite{Lam-Lu-Zhang-2019'} also established the existence
	and nonexistence of the maximizers for singular Trudinger-Moser
	inequalities in different ranges of the parameters.
Wang-Xia \cite{Wang-Xia-2012} investigated Trudinger-Moser inequality involving the anisotropic Dirichlet norm $\left(\int_{\Omega}F(\nabla u)^{N}\;\mathrm{d}x\right)^{\frac{1}{N}}$, precisely,
\begin{align}
    \sup_{u\in W_{0}^{1,N}(\Omega),\;\|F(\nabla u)\|_{N}\leq 1}
    \int_{\Omega}e^{\lambda_{N}\lvert u\rvert^{\frac{N}{N-1}}}\;\mathrm{d}x<+\infty,\notag
\end{align}
where $\lambda_{N}=N^{\frac{N}{N-1}}\kappa_{N}^{\frac{1}{N-1}}$ and $\kappa_{N}=|\{x \in \mathbb{R}^{N} : F^{o}(x)\leq 1\}|$ is the volume of a unit Wulff ball in $\mathbb{R}^N$, where $F:\mathbb{R}^{N}\rightarrow[0,+\infty)$ is a convex function of class $C^{2}(\mathbb{R}^{N}\setminus\{0\})$, which is even and positively homogeneous of degree 1, $F_{\xi_{i}} = \frac{\partial F}{\partial\xi_{i}}$ and its polar $F^{o}(x)$ represents a Finsler metric on $\mathbb{R}^{N}$. Subsequently, Zhou-Zhou \cite{Zhou-Zhou-2019}  generalized the inequality to $\R^{N}$ as follows:
\begin{align}\label{eq1.5}
    \sup_{u\in W^{1,N}(\R^{N}),\;\int_{\R^{N}}(F(\nabla u)^{N}+\lvert u\rvert^{N})\;\mathrm{d}x\leq1}\int_{\R^{N}}\Phi_{N}(\lambda_{N}\lvert u\rvert^{\frac{N}{N-1}})\;\mathrm{d}x<+\infty,
\end{align}
and they  obtained the existence of extremal functions.  In \cite{Liu}, the second author established the anisotropic Trudinger-Moser inequality associated with exact growth in $\R^{N}$,  moreover, the author also calculated the supremum of the inequality, and the existence and nonexistence of extremal functions  are also obtained in certain cases.
It is worth mentioning that in Lu-Shen-Xue-Zhu \cite{Lu-Shen-Xue-Zhu-2024}, they established the weighted anisotropic isoperimetric inequalities, which can be used to study the anisotropic Trudinger-Moser inequality.

In this paper, we will always assume that
\begin{align}\label{eq1.6}
    N\geq 2,\;\;0\leq\beta<N,\;\;0\leq\lambda<\lambda_{N}:=N^{\frac{N}{N-1}}\kappa_{N}^{\frac{1}{N-1}},\;\;q\geq 1,
\end{align}
and consider the function
$$
\Phi_{N,q,\beta}(t):=\left\{\begin{array}{cc}
\sum\limits_{j\in\N,\;j>\frac{q(N-1)}{N}(1-\frac{\beta}{N})}\frac{t^{j}}{j!} & {\mathrm{if}}\;\beta>0 \\
\sum\limits_{j\in\N,\;j\geq\frac{q(N-1)}{N}}\frac{t^{j}}{j!} & {\mathrm{if}}\;\beta=0
\end{array}\right..
$$
In the recent paper \cite{Guo-Liu-2024}, we have established the following results, where the supremums are for the functions in $D^{N,q}(\R^{N})$, the completion of $C_{0}^{\infty}(\R^{N})$ under the norm $\|\nabla u\|_{N}+\|u\|_{q}$.

\begin{theoremalph}(Sharp subcritical anisotropic Trudinger-Moser inequality).\label{tha}
Let $p>q(1-\frac{\beta}{N})$ if $\beta\neq0$ and $p\geq q$ if $\beta=0$. Then
\begin{align}
    \mathrm{ATMSC}(N,p,q,\lambda,\beta)&:=\sup\limits_{\|F(\nabla u)\|_{N}\leq 1}\frac{1}{\|u\|_{q}^{q(1-\frac{\beta}{N})}}\int\limits_{\R^{N}}\frac{\exp(\lambda(1-\frac{\beta}{N})\lvert u\rvert^{\frac{N}{N-1}})\lvert u\rvert^{p}}{F^{o}(x)^{\beta}}\;\mathrm{d}x<+\infty,\notag\\
    \mathrm{ATMSC}(N,q,\lambda,\beta)&:=\sup\limits_{\|F(\nabla u)\|_{N}\leq 1}\frac{1}{\|u\|_{q}^{q(1-\frac{\beta}{N})}}\int\limits_{\R^{N}}\frac{\Phi_{N,q,\beta}(\lambda(1-\frac{\beta}{N})\lvert u\rvert^{\frac{N}{N-1}})}{F^{o}(x)^{\beta}}\;\mathrm{d}x<+\infty,\notag
\end{align}
\end{theoremalph}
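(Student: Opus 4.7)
The plan is to prove the first bound $\mathrm{ATMSC}(N,p,q,\lambda,\beta)<+\infty$ via convex symmetrization and a one-dimensional Adachi--Tanaka-type estimate, and then to deduce $\mathrm{ATMSC}(N,q,\lambda,\beta)<+\infty$ by a term-by-term expansion of $\Phi_{N,q,\beta}$.

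\textbf{Step 1: Wulff-symmetric reduction.} I would first apply the convex symmetrization $u\mapsto u^{\star}$ in the anisotropic (Finsler) form developed by Wang--Xia following Alvino--Ferone--Lions--Trombetti. Since $u^{\star}$ is equimeasurable with $|u|$ one has $\|u^{\star}\|_{q}=\|u\|_{q}$; the anisotropic P\'olya--Szeg\H{o} inequality yields $\|F(\nabla u^{\star})\|_{N}\leq\|F(\nabla u)\|_{N}$; and because the weight $F^{o}(x)^{-\beta}$ is itself Wulff-symmetric and decreasing, the Hardy--Littlewood rearrangement inequality shows both integrals in the theorem can only increase under $u\mapsto u^{\star}$. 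It therefore suffices to work with $u(x)=w(r)$, $r=F^{o}(x)$, with $w$ non-increasing, which converts every integral in the statement into a one-dimensional integral over $(0,\infty)$ with weight $N\kappa_{N}r^{N-1-\beta}\,\mathrm{d}r$.

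\textbf{Step 2: Moser transform and the subcritical gap.} Next I would apply Moser's substitution $r=e^{-t/N}$ together with the normalization $\psi(t):=\kappa_{N}^{1/N}N\,w(e^{-t/N})$, which converts $\|F(\nabla u)\|_{N}\leq 1$ into $\int_{\R}|\psi'|^{N}\,\mathrm{d}t\leq 1$. The target integral then becomes, up to an absolute multiplicative constant,
\begin{equation*}
    \int_{\R}\exp\!\Big(\tfrac{\lambda}{\lambda_{N}}(1-\tfrac{\beta}{N})|\psi(t)|^{N/(N-1)}\Big)|\psi(t)|^{p}\,e^{-(1-\beta/N)\,t}\,\mathrm{d}t.
\end{equation*}
For large $t$, H\"older applied to $\psi(t)-\psi(t_{0})=\int_{t_{0}}^{t}\psi'$ gives the pointwise bound $|\psi(t)|^{N/(N-1)}\leq t+O(1)$, so the integrand is controlled by $t^{p(N-1)/N}e^{-(1-\lambda/\lambda_{N})(1-\beta/N)\,t}$. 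The strict subcriticality $\lambda<\lambda_{N}$ furnishes a positive exponential gap $(1-\lambda/\lambda_{N})(1-\beta/N)>0$, making the $t\to+\infty$ tail integrable uniformly in $\psi$.

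\textbf{Step 3: $L^{q}$ normalization and the second bound.} To match the denominator $\|u\|_{q}^{q(1-\beta/N)}$ I would split $\R^{N}$ at a threshold $M$ chosen as a negative power of $\|u\|_{q}$: on $\{|u|\leq M\}$ the exponential is uniformly bounded and the integrand is dominated by $C|u|^{p}/F^{o}(x)^{\beta}$, controlled by H\"older combined with the singular Hardy inequality, using the hypothesis $p>q(1-\beta/N)$ (respectively $p\geq q$ if $\beta=0$); on $\{|u|>M\}$ the support has measure $\lesssim\|u\|_{q}^{q}/M^{q}$ by Chebyshev, and the integral is absorbed by the Step 2 tail estimate. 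The main obstacle is to coordinate this balance with the subcritical gap so that both contributions emerge as a uniform constant times $\|u\|_{q}^{q(1-\beta/N)}$; the exponent $q(1-\beta/N)$ is forced here, matching the scale invariance of the whole ratio under $u(x)\mapsto u(\rho x)$. Finally, the second inequality is derived by expanding $\Phi_{N,q,\beta}$ in its power series: the defining restriction on the summation index ensures every surviving monomial $|u|^{jN/(N-1)}$ has exponent strictly greater than $q(1-\beta/N)$, so applying the first inequality with $p=jN/(N-1)$ and a slightly enlarged parameter $\lambda'\in(\lambda,\lambda_{N})$ controls each term, while the factor $1/j!$ provides summability in $j$.
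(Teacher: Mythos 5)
This statement is not proved in the present paper at all: it is imported verbatim from the authors' earlier work \cite{Guo-Liu-2024}, so your proposal can only be judged against the standard argument, which is essentially the route you describe. Your Steps 1 and 2 are sound: convex symmetrization plus Hardy--Littlewood reduces to Wulff-symmetric decreasing profiles $u=w(F^{o}(x))$; the Moser substitution with $\psi(t)=N\kappa_{N}^{1/N}w(e^{-t/N})$ correctly converts $\lambda|u|^{N/(N-1)}$ into $\frac{\lambda}{\lambda_{N}}|\psi|^{N/(N-1)}$ and the weighted volume element into $e^{-(1-\beta/N)t}\,\mathrm{d}t$; the H\"older bound gives $|\psi(t)|^{N/(N-1)}\leq(1+\epsilon)t+O(1)$, and the gap $\lambda<\lambda_{N}$ absorbs both the $\epsilon$ and the polynomial factor. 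Your scale-invariance remark is the right way to see that one may normalize $\|u\|_{q}=1$, after which the radial estimate $w(r)\leq(\kappa_{N}r^{N})^{-1/q}\|u\|_{q}$ makes the $O(1)$ uniform and, combined with $p\geq q(1-\beta/N)$, controls the low region.

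The one genuine soft spot is your deduction of the second bound. As written, you estimate each monomial $|u|^{jN/(N-1)}$ by the first inequality (with some $\lambda'\in(\lambda,\lambda_{N})$) and invoke $1/j!$ for summability; but the constant $\mathrm{ATMSC}(N,jN/(N-1),q,\lambda',\beta)$ grows factorially in $j$ (the one-dimensional model integral is of order $\Gamma(j+1)\,\delta^{-j}$ with $\delta=(1-\lambda'/\lambda_{N})(1-\beta/N)$), so the resulting series behaves like a geometric series whose ratio need not be below $1$; term-by-term summation does not obviously close. The fix is one line and is what you should write instead: dominate the entire tail series by its lowest-order term times the full exponential,
\begin{equation*}
\Phi_{N,q,\beta}(s)=\sum_{j\geq j_{0}}\frac{s^{j}}{j!}\leq\frac{s^{j_{0}}}{j_{0}!}\,e^{s},
\end{equation*}
where $j_{0}$ is the smallest admissible index. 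This reduces the second inequality to the first with $p=j_{0}\frac{N}{N-1}$ and the \emph{same} $\lambda$, and the index restriction in the definition of $\Phi_{N,q,\beta}$ is exactly what guarantees $p>q(1-\frac{\beta}{N})$ when $\beta>0$ and $p\geq q$ when $\beta=0$. With that replacement your outline is a correct proof plan.
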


\begin{theoremalph}(Sharp critical anisotropic Trudinger-Moser inequality).\label{thb}
Let $a>0$ and $b>0$. Then
\begin{align}
    \mathrm{ATMC}_{a,b}(N,q,\beta):=\sup\limits_{\|F(\nabla u)\|_{N}^{a}+\|u\|_{q}^{b}\leq 1}\int\limits_{\R^{N}}\frac{\Phi_{N,q,\beta}(\lambda_{N}(1-\frac{\beta}{N})\lvert u\rvert^{\frac{N}{N-1}})}{F^{o}(x)^{\beta}}\;\mathrm{d}x<\infty\Leftrightarrow b\leq N.\notag
\end{align}
The constant $\lambda_{N}$ is sharp.
\end{theoremalph}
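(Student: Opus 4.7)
The plan is to reduce the critical inequality to the subcritical inequality of Theorem A via a scaling argument, and then to extract the threshold $b=N$ by matching the blow-up rate of $\mathrm{ATMSC}$ near $\lambda_{N}$ against the decay of $(1-s^{a})^{1/b}$ coming from the critical constraint. First I would invoke the convex (Wulff) symmetrization of Wang-Xia: since the integrand depends on $u$ only through $|u|$ and on $x$ only through $F^{o}(x)$, passing from $u$ to its convex symmetrization $u^{\star}$ preserves $\|u\|_{q}$, does not increase $\|F(\nabla u)\|_{N}$, and does not decrease the integral. Hence it suffices to work with nonnegative, Wulff-symmetric functions that are nonincreasing in $F^{o}(x)$.

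For the sufficiency direction $b\leq N$, take such a $u\not\equiv 0$ with $\|F(\nabla u)\|_{N}^{a}+\|u\|_{q}^{b}\leq 1$ and set $s:=\|F(\nabla u)\|_{N}\in(0,1)$ and $v:=u/s$, so that $\|F(\nabla v)\|_{N}=1$. Applying Theorem A to $v$ with the subcritical parameter $\lambda=\lambda_{N}s^{N/(N-1)}<\lambda_{N}$ and using $\|v\|_{q}=\|u\|_{q}/s\leq(1-s^{a})^{1/b}/s$ yields
\begin{align*}
\int_{\R^{N}}\frac{\Phi_{N,q,\beta}(\lambda_{N}(1-\beta/N)|u|^{N/(N-1)})}{F^{o}(x)^{\beta}}\,\mathrm{d}x\leq \mathrm{ATMSC}(N,q,\lambda_{N}s^{N/(N-1)},\beta)\cdot\frac{(1-s^{a})^{q(1-\beta/N)/b}}{s^{q(1-\beta/N)}}.
\end{align*}
It then remains to bound the right-hand side uniformly in $s\in(0,1)$. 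As $s\to 0^{+}$, a term-by-term Taylor expansion of $\Phi_{N,q,\beta}$ together with the anisotropic Sobolev embedding shows $\mathrm{ATMSC}(N,q,\lambda,\beta)=O(\lambda^{j_{0}})$ for the smallest admissible index $j_{0}$ with $j_{0}N/(N-1)>q(1-\beta/N)$, which absorbs the $s^{-q(1-\beta/N)}$ singularity. As $s\to 1^{-}$, the key step is the sharp growth $\mathrm{ATMSC}(N,q,\lambda,\beta)=O((\lambda_{N}-\lambda)^{-q(1-\beta/N)/N})$ as $\lambda\uparrow\lambda_{N}$; combined with $\lambda_{N}-\lambda_{N}s^{N/(N-1)}\asymp 1-s$ and $(1-s^{a})^{q(1-\beta/N)/b}\asymp(1-s)^{q(1-\beta/N)/b}$, this leaves a factor $(1-s)^{q(1-\beta/N)(1/b-1/N)}$, bounded precisely when $b\leq N$. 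Continuity of $\lambda\mapsto\mathrm{ATMSC}(N,q,\lambda,\beta)$ on compact subsets of $(0,\lambda_{N})$ handles the interior of the range.

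For necessity ($b>N\Rightarrow$ the supremum is infinite) and sharpness of $\lambda_{N}$, I would use the anisotropic Moser log-profile $u_{n,R}$ supported in the Wulff ball $\{F^{o}(x)\leq R\}$ and concentrating on $\{F^{o}(x)\leq R/n\}$, normalized so $\|F(\nabla u_{n,R})\|_{N}=1$ and $\|u_{n,R}\|_{q}\asymp R^{N/q}(\log n)^{-1/N}$. Multiplying by $t_{n}\to 1^{-}$ to saturate the constraint, the near-origin contribution to the integral is of order $R^{N-\beta}\,n^{(N-\beta)(\lambda t_{n}^{N/(N-1)}/\lambda_{N}-1)}$: for any $\lambda>\lambda_{N}$ this exponent is eventually positive and forces divergence, whereas for $\lambda=\lambda_{N}$ and $b>N$ one has $1-t_{n}^{N/(N-1)}\asymp R^{Nb/q}(\log n)^{-b/N}$, so choosing $R=R_{n}=(\log n)^{\gamma}$ with $\gamma>0$ small enough that $R_{n}^{Nb/q}(\log n)^{1-b/N}\to 0$ while $R_{n}^{N-\beta}\to\infty$—possible exactly because $1-b/N<0$—drives the integral to infinity. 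The hard part of the whole scheme is establishing the sharp blow-up rate $(\lambda_{N}-\lambda)^{-q(1-\beta/N)/N}$ for $\mathrm{ATMSC}$ both from above (for sufficiency) and from below via a matching Moser sequence: this requires a delicate concentration analysis, adapting the Lam-Lu-Zhang subcritical-critical relation to the anisotropic Finsler framework with the singular weight $F^{o}(x)^{-\beta}$ and the admissible ranges of $p,q$ given by Theorem A.
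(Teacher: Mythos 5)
You should first note that the paper you are looking at does not actually prove Theorem B: it is quoted as background from \cite{Guo-Liu-2024}, and the only trace of its proof here is the identity
$\mathrm{ATMC}_{a,b}(N,q,\beta)=\sup_{\lambda\in(0,\lambda_{N})}\bigl(\frac{1-(\lambda/\lambda_{N})^{a(N-1)/N}}{(\lambda/\lambda_{N})^{b(N-1)/N}}\bigr)^{\frac{q}{b}(1-\frac{\beta}{N})}\mathrm{ATMSC}(N,q,\lambda,\beta)$,
which is exactly the subcritical-to-critical scaling relation your first display reproduces (your $s=\|F(\nabla u)\|_{N}$, $\lambda=\lambda_{N}s^{N/(N-1)}$ substitution is the same change of variables, with $t=(\lambda/\lambda_{N})^{(N-1)/N}\cdot$(dilation) restoring $\|v\|_{q}=1$). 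So your overall architecture --- convex symmetrization, reduction to $\mathrm{ATMSC}$, and a separate Moser-sequence computation for $b>N$ and for $\lambda>\lambda_{N}$ --- coincides with the Lam--Lu--Zhang route the authors follow. Your necessity/sharpness half is self-contained and the exponent bookkeeping there checks out: the peak contribution $R^{N-\beta}n^{(N-\beta)(\lambda t_{n}^{N/(N-1)}/\lambda_{N}-1)}$, the relation $1-t_{n}\asymp R^{Nb/q}(\log n)^{-b/N}$, and the choice $R=(\log n)^{\gamma}$ with $0<\gamma<\frac{q}{N^{2}}-\frac{q}{Nb}$ (nonempty precisely when $b>N$) do force divergence.

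The genuine gap is in the sufficiency direction: everything there is conditional on the two-sided asymptotic $\mathrm{ATMSC}(N,q,\lambda,\beta)\asymp(\lambda_{N}-\lambda)^{-\frac{q}{N}(1-\frac{\beta}{N})}$ as $\lambda\uparrow\lambda_{N}$, and you state the upper bound as a "key step" without proving it. That upper bound \emph{is} the theorem in disguise --- once you have it, finiteness for $b\le N$ is a one-line exponent comparison, as your own computation $(1-s)^{q(1-\beta/N)(1/b-1/N)}$ shows --- so deferring it to "a delicate concentration analysis" leaves the proof with no actual content in the hard direction. The standard way to close it (and what \cite{Lam-Lu-Zhang-2017} and its anisotropic adaptation do) is elementary and does not require blow-up analysis: for Wulff-symmetric $u$ with $\|F(\nabla u)\|_{N}=\|u\|_{q}=1$, truncate at a level $A(\lambda)\asymp(1-\lambda/\lambda_{N})^{-(N-1)/N}$; on $\{u>A\}$ the function $(u-A)_{+}$ still has anisotropic Dirichlet norm at most $1$ and is supported in a Wulff ball of measure at most $A^{-q}$ by Chebyshev, so the finite-volume anisotropic Trudinger--Moser inequality of Wang--Xia applies after the pointwise estimate $\lambda u^{N/(N-1)}\le\lambda_{N}(u-A)_{+}^{N/(N-1)}+C(\lambda)A^{N/(N-1)}$, and the factor $e^{C\lambda A^{N/(N-1)}}\cdot|W|^{1-\beta/N}$ produces exactly the rate $(\lambda_{N}-\lambda)^{-\frac{q}{N}(1-\frac{\beta}{N})}$; on $\{u\le A\}$ the series $\Phi_{N,q,\beta}$ is dominated by its lowest term times $\|u\|_{q}^{q(1-\beta/N)}$ via the radial decay estimate $u(x)^{q}\le(\kappa_{N}F^{o}(x)^{N})^{-1}$ that you already invoke elsewhere. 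Without this (or an equivalent) argument, your proposal establishes only the easy implications.
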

When $F=\lvert\cdot\rvert$, the above results are the isotropic Trudinger-Moser inequality in Lam-Lu-Zhang \cite{Lam-Lu-Zhang-2019}.  Motivated by the results in \cite{Lam-Lu-Zhang-2017} and \cite{Lam-Lu-Zhang-2019'}, based on the relationship between  subcritical and
critical  Trudinger-Moser inequalities, they  obtained the existence and symmetry of extremal functions under isotropic case, these results provide us with valuable insights, here we consider  the anosotropic case, our first aim  is to study the maximizers for the subcritical anisotropic Trudinger-Moser inequality, and $u^{\star}$ as the convex symmetrization of $u$ with respect to $F$ is defined in Section \ref{section2}.

\begin{theorem}\label{th1.1}
    Let $p>q(1-\frac{\beta}{N})$ if $\beta\neq0$, $p\geq q$ if $\beta=0$ and $q>1$. Then the supremums of the subcritical anisotropic Trudinger-Moser inequalities $\mathrm{ATMSC}(N,p,q,\lambda,\beta)$ and $\mathrm{ATMSC}(N,q,\lambda,\beta)$ can be attained by some nonnegative, Wulff symmetric $u_{0}$ satisfying $\|F(\nabla u_{0})\|_{N}=\|u_{0}\|_{q}=1$. Moreover, all the maximizers $u$ satisfy $\|F(\nabla u)\|_{N}=\|F(\nabla u^{\star})\|_{N}=1$, and $u$ can be adjusted to be nonnegative, Wulff symmetric function $v$ with $\|F(\nabla v)\|_{N}=\|F(\nabla v^{\star})\|_{N}=\|v\|_{q}=1$.
\end{theorem}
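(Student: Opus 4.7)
I would prove both statements in parallel, focusing on the functional defining $\mathrm{ATMSC}(N,q,\lambda,\beta)$; the $p$-version follows by the same scheme. Take a maximizing sequence $\{u_n\}\subset D^{N,q}(\R^N)$ with $\|F(\nabla u_n)\|_N\le 1$ and replace each $u_n$ by its anisotropic convex symmetrization $u_n^{\star}$ introduced in Section~\ref{section2}. The anisotropic P\'{o}lya--Szeg\"{o} inequality of Wang--Xia \cite{Wang-Xia-2012} decreases $\|F(\nabla\cdot)\|_N$, equimeasurability preserves the $L^q$-norm, and the Hardy--Littlewood inequality applied against the Wulff-symmetric, radially nonincreasing weight $F^o(x)^{-\beta}$ increases the non-local integral, so the functional value only grows. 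We may therefore assume each $u_n\ge 0$ is Wulff symmetric. Moreover, since every index $j$ appearing in the series defining $\Phi_{N,q,\beta}$ satisfies $jN/(N-1)>q(1-\beta/N)$, a direct computation shows that the rescaling $u\mapsto\mu u$ strictly increases the functional for $\mu>1$, so the supremum is attained only where $\|F(\nabla u_n)\|_N=1$, which I now impose.

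To extract a weak limit I need $\|u_n\|_q$ to stay in a compact subinterval of $(0,\infty)$. A strictly positive lower bound comes from the lowest-order term in $\Phi_{N,q,\beta}$: the corresponding integral $\int_{\R^N}|u|^{j_0 N/(N-1)}/F^o(x)^\beta\,\mathrm{d}x$ is controlled by an anisotropic Hardy--Sobolev inequality in terms of $\|F(\nabla u)\|_N$ and $\|u\|_q$, forcing the functional to vanish if $\|u_n\|_q\to 0$. An upper bound follows from an anisotropic Strauss-type radial decay $|u_n(x)|\le C F^o(x)^{-N/q}$ valid for Wulff-symmetric functions of bounded $D^{N,q}$-norm together with the global control of Theorem~A. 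Consequently $\{u_n\}$ is bounded in $D^{N,q}(\R^N)$ and, along a subsequence, $u_n\rightharpoonup u_0$ weakly in $D^{N,q}(\R^N)$, $u_n\to u_0$ almost everywhere and in $L^r_{\mathrm{loc}}$ for every finite $r$; $u_0$ inherits Wulff symmetry and nonnegativity.

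The principal difficulty is passage to the limit in the non-local integral. Since $\lambda<\lambda_N$, pick $\lambda'\in(\lambda,\lambda_N)$: Theorem~A at the larger exponent $\lambda'$ gives a uniform bound on $\int_{\R^N}\Phi_{N,q,\beta}(\lambda'(1-\beta/N)|u_n|^{N/(N-1)})/F^o(x)^\beta\,\mathrm{d}x$, whence a H\"{o}lder interpolation exploiting the gap $\lambda'-\lambda$ yields equi-integrability of $\Phi_{N,q,\beta}(\lambda(1-\beta/N)|u_n|^{N/(N-1)})/F^o(x)^\beta$ on bounded sets, while the Wulff-symmetric tail decay tames the integrand outside a large Wulff ball. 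Vitali's theorem then delivers convergence of both the non-local integral and $\|u_n\|_q$, so $u_0$ realizes the supremum. The strict monotonicity of the functional under $u\mapsto\mu u$ together with maximality enforces $\|F(\nabla u_0)\|_N=1$, and the parallel normalization of the maximizing sequence gives $\|u_0\|_q=1$. For an arbitrary maximizer $u$, the same symmetrization shows $u^{\star}$ is also a maximizer; were $\|F(\nabla u^{\star})\|_N<\|F(\nabla u)\|_N$ one could rescale $u^{\star}$ by some $\mu>1$ to strictly increase the functional, a contradiction, forcing $\|F(\nabla u)\|_N=\|F(\nabla u^{\star})\|_N=1$, and $v:=u^{\star}$ is then the claimed adjustment. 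The main obstacle is precisely the equi-integrability step used to pass through the non-local integral under weak convergence, which is exactly where the subcritical gap $\lambda<\lambda_N$ plays an essential role.
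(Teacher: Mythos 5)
Your overall strategy is the paper's: symmetrize via P\'{o}lya--Szeg\"{o} and Hardy--Littlewood, extract a weak limit from a Wulff-symmetric maximizing sequence, control the tail by the radial decay of Wulff-symmetric functions, pass to the limit on a large Wulff ball by uniform integrability coming from Theorem A, and finish with a scaling argument for $\|F(\nabla u)\|_{N}=1$. However, there are two genuine gaps in your compactness bookkeeping. First, your claimed a priori upper bound on $\|u_n\|_q$ cannot be \emph{derived}: the functional is invariant under the dilations $u\mapsto u(\cdot/t)$, which preserve $\|F(\nabla u)\|_N$ and the ratio defining $\mathrm{ATMSC}$ while sending $\|u\|_q\mapsto t^{N/q}\|u\|_q$, so a maximizing sequence with $\|F(\nabla u_n)\|_N=1$ may perfectly well have $\|u_n\|_q\to\infty$ (or $0$). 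Moreover the Strauss-type decay $|u_n(x)|\le (\kappa_N F^o(x)^N)^{-1/q}\|u_n\|_q$ has a constant proportional to $\|u_n\|_q$ itself, so it cannot bound $\|u_n\|_q$ — the argument is circular. The bound must be \emph{imposed} by exploiting the dilation freedom, which is exactly what the paper's Lemma \ref{le2.5} does by normalizing $\|F(\nabla u_n)\|_N=\|u_n\|_q=1$.

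Second, the assertion that Vitali's theorem ``delivers convergence of $\|u_n\|_q$'' is false in general: the pointwise decay only gives $|u_n(x)|^q\lesssim F^o(x)^{-N}$, which is not integrable at infinity, so $\{|u_n|^q\}$ need not be tight and $L^q$ mass can escape to infinity. This matters precisely in the borderline situations $p=q$, $\beta=0$ (and for the lowest term of $\Phi_{N,q,0}$ when $\frac{q(N-1)}{N}\in\N$), where the integrand contains the term $c\lvert u_n\rvert^{q}$ whose tail integral does not vanish as $R\to\infty$; your equi-integrability step does not cover it. The paper circumvents this in two ways you should adopt: (i) for $p=q$ it writes $e^t\le 1+te^t$ and passes to the limit only in $\int[\exp(\lambda|u_n|^{\frac{N}{N-1}})-1]|u_n|^q\,\mathrm{d}x$, whose tail carries the extra factor $\epsilon^{\frac{N}{N-1}}$, isolating the non-convergent piece as the constant $\|u_n\|_q^q=1$; (ii) it never needs $\|u_n\|_q\to\|u_0\|_q$ at all, but instead deduces $\mathrm{ATMSC}\le\int(\cdots)$ for the weak limit $u_0$ and then uses $\|u_0\|_q\le 1$ (weak lower semicontinuity) so that dividing by $\|u_0\|_q^{q(1-\beta/N)}$ can only increase the value, forcing $\|u_0\|_q=1$ \emph{a posteriori}. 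With the normalization of Lemma \ref{le2.5} in place and this final extremality argument substituted for your norm-convergence claim, the rest of your outline (including the scaling step giving $\|F(\nabla u)\|_N=\|F(\nabla u^{\star})\|_N=1$) goes through as in the paper.
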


In the singular case, we will show that the absolute values of the maximizers are Wulff symmetric.

\begin{theorem}\label{th1.2}
    Let $0<\beta<N$, $p>q(1-\frac{\beta}{N})$ and $q>1$. Suppose $u$ is a maximizer of the supremum $\mathrm{ATMSC}(N,p,q,\lambda,\beta)$ or $\mathrm{ATMSC}(N,q,\lambda,\beta)$. Then $\lvert u\rvert=\lvert u\rvert^{\star}$, namely, $\lvert u\rvert$ is Wulff symmetric.
\end{theorem}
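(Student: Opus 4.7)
My plan is to combine convex symmetrization with a strict form of the Hardy--Littlewood inequality, exploiting that the singular weight $F^o(x)^{-\beta}$ is Wulff symmetric and, because $\beta>0$, \emph{strictly} decreasing as a function of $F^o(x)$.

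First, I would let $u$ be any maximizer and observe that $|u|$ is again a maximizer: since $F$ is even, $F(\nabla|u|)=F(\nabla u)$ almost everywhere, and the numerators of both functionals depend only on $|u|$. Then I would form the Wulff symmetrization $w:=|u|^{\star}$ and record the three standard ingredients from Section \ref{section2}: the anisotropic P\'{o}lya--Szeg\"{o} inequality $\|F(\nabla w)\|_N\le\|F(\nabla|u|)\|_N=1$; equimeasurability $\|w\|_q=\||u|\|_q$; and the Hardy--Littlewood inequality with the Wulff symmetric weight $F^o(x)^{-\beta}$, which gives
\[
\int_{\R^N}\frac{G(|u|)}{F^o(x)^\beta}\,\mathrm{d}x \;\le\; \int_{\R^N}\frac{G(w)}{F^o(x)^\beta}\,\mathrm{d}x,
\]
with $G(t)=t^p\exp(\lambda(1-\beta/N)t^{N/(N-1)})$ for $\mathrm{ATMSC}(N,p,q,\lambda,\beta)$ and $G(t)=\Phi_{N,q,\beta}(\lambda(1-\beta/N)t^{N/(N-1)})$ for $\mathrm{ATMSC}(N,q,\lambda,\beta)$. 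In both cases $G$ is continuous and strictly increasing on $[0,\infty)$.

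Next I would ensure that $w$ saturates the gradient constraint. If $\|F(\nabla w)\|_N<1$, the rescaling $\tilde w=cw$ with $c=\|F(\nabla w)\|_N^{-1}>1$ strictly enlarges the Trudinger--Moser quotient, thanks to $p>q(1-\beta/N)$ in the first case and the term-by-term inequality $c^{Nj/(N-1)}>c^{q(1-\beta/N)}$ valid for all indices $j>\tfrac{q(N-1)}{N}(1-\tfrac{\beta}{N})$ appearing in $\Phi_{N,q,\beta}$ in the second, contradicting maximality of $u$. (Alternatively, Theorem \ref{th1.1} already supplies $\|F(\nabla|u|^\star)\|_N=1$.) Thus $w$ is admissible with $\|w\|_q=\||u|\|_q$, and maximality of $u$ forces equality in the Hardy--Littlewood step:
\[
\int_{\R^N}\frac{G(|u|)}{F^o(x)^\beta}\,\mathrm{d}x = \int_{\R^N}\frac{G(|u|^\star)}{F^o(x)^\beta}\,\mathrm{d}x.
\]

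The hard part will be the final step: upgrading this integral equality to the pointwise identity $|u|=|u|^\star$. Here I would invoke the strict version of the Hardy--Littlewood inequality against the Wulff symmetric weight $F^o(x)^{-\beta}$, which is strictly decreasing in $F^o(x)$ precisely because $\beta>0$. Via the layer-cake formula applied to $G(|u|)$ and the bathtub principle applied to $F^o(x)^{-\beta}$, the above equality forces every super-level set $\{G(|u|)>t\}$ to coincide up to a null set with the Wulff ball of equal measure centered at the origin, since such balls are the unique measure-theoretic maximizers of $\int_A F^o(x)^{-\beta}\,\mathrm{d}x$ among sets of given measure. Consequently $G(|u|)$ is Wulff symmetric, and strict monotonicity of $G$ transfers this to $|u|$, giving $|u|=|u|^\star$. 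The obstacle is precisely this anisotropic rigidity statement -- no classical Schwartz rearrangement is directly available -- but it is the strict monotonicity of the weight, inherited from $\beta>0$, that makes the bathtub argument single out the concentric Wulff balls and complete the proof.
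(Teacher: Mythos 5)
Your argument is correct and is essentially the paper's proof: symmetrize $|u|$, apply the anisotropic P\'{o}lya--Szeg\"{o} inequality and the Hardy--Littlewood inequality against the strictly decreasing Wulff symmetric weight $F^o(x)^{-\beta}$, and use maximality to force equality, whose rigidity (which you re-derive via layer-cake and the bathtub principle) is exactly the ``moreover'' clause of the paper's Lemma \ref{le2.3}. The only superfluous step is the rescaling to saturate the gradient constraint: since $\mathrm{ATMSC}$ is defined as a supremum of a quotient over $\|F(\nabla u)\|_N\le 1$, the symmetrized function $w=|u|^{\star}$ is admissible as soon as P\'{o}lya--Szeg\"{o} gives $\|F(\nabla w)\|_N\le 1$, so the comparison with $u$ is immediate.
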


Our next aim is to study the existence and nonexistence of the maximizers for the critical anisotropic Trudinger-Moser inequality in the subcritical case. The following identity has been established in \cite{Guo-Liu-2024}:
$$
{\mathrm{ATMC}}_{a,b}(N,q,\beta)=\sup\limits_{\lambda\in(0,\lambda_{N})}
\left(\frac{1-(\frac{\lambda}{\lambda_{N}})^{a\frac{N-1}{N}}}{(\frac{\lambda}{\lambda_{N}})^{b\frac{N-1}{N}}}\right)^{\frac{q}{b}(1-\frac{\beta}{N})}{\mathrm{ATMSC}}(N,q,\lambda,\beta).
$$
Now, define
$$
\Lambda_{a,b}(N,q,\lambda,\beta):=\sup\limits_{\|F(\nabla u)\|_{N}^{a}+\|u\|_{q}^{b}\leq 1}\int\limits_{\R^{N}}\frac{\Phi_{N,q,\beta}(\lambda(1-\frac{\beta}{N})\lvert u\rvert^{\frac{N}{N-1}})}{F^{o}(x)^{\beta}}\;\mathrm{d}x.
$$
Using the same approach, we can prove
\begin{align}\label{AA}
    \Lambda_{a,b}(N,q,\lambda,\beta)=\sup\limits_{t\in(0,\lambda)}\left(\frac{1-(\frac{t}{\lambda})^{a\frac{N-1}{N}}}{(\frac{t}{\lambda})^{b\frac{N-1}{N}}}\right)^{\frac{q}{b}(1-\frac{\beta}{N})}{\mathrm{ATMSC}}(N,q,t,\beta).
\end{align}
Moreover, we have
\begin{theorem}\label{th1.3}
    Let $a>0$, $b>0$, $q>1$ and $0<\lambda<\lambda_{N}$.

    \textup{(i)} If $\beta>0$, then the supremum $\Lambda_{a,b}(N,q,\lambda,\beta)$ can be attained and the absolute values of the maximizers are Wulff symmetric.

    \textup{(ii)} If $\beta=0$ and $\frac{q(N-1)}{N}\not\in\N$, then the supremum $\Lambda_{a,b}(N,q,\lambda, 0)$ can be attained.

    \textup{(iii)} If $\beta=0$ and $\frac{q(N-1)}{N}\in\N$, then the supremum $\Lambda_{a,b}(N,q,\lambda, 0)$ is not attained only if $$\Lambda_{a,b}(N,q,\lambda,0)\leq\frac{\lambda^{\frac{q(N-1)}{N}}}{\left(\frac{q(N-1)}{N}\right)!}.$$
\end{theorem}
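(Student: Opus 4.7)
The proof rests on the variational identity \eqref{AA}, which transfers the question to the one-variable function
\[
\Psi(t):=\left(\frac{1-(t/\lambda)^{a(N-1)/N}}{(t/\lambda)^{b(N-1)/N}}\right)^{q(1-\beta/N)/b}\mathrm{ATMSC}(N,q,t,\beta),\qquad t\in(0,\lambda).
\]
I would first show that attainment of $\Lambda_{a,b}(N,q,\lambda,\beta)$ is equivalent to attainment of $\sup_{(0,\lambda)}\Psi$. Indeed, if $\Psi(t_{0})=\sup\Psi$ and $u_{t_{0}}$ is a maximizer of $\mathrm{ATMSC}(N,q,t_{0},\beta)$ with $\|F(\nabla u_{t_{0}})\|_{N}=\|u_{t_{0}}\|_{q}=1$ (produced by Theorem~\ref{th1.1}), then setting $c=(t_{0}/\lambda)^{(N-1)/N}$ and choosing $r>0$ so that $c^{a}+c^{b}r^{-Nb/q}=1$, the function $v(x):=c\,u_{t_{0}}(rx)$ satisfies $\|F(\nabla v)\|_{N}^{a}+\|v\|_{q}^{b}=1$; a direct substitution using the $1$-homogeneity $F^{o}(y/r)=r^{-1}F^{o}(y)$ shows that $\int \Phi_{N,q,\beta}(\lambda(1-\beta/N)|v|^{N/(N-1)})/F^{o}(x)^{\beta}\,dx=\Psi(t_{0})$. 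For $\beta>0$, Theorem~\ref{th1.2} ensures that $|u_{t_{0}}|$ is Wulff symmetric, a property preserved by the rescaling.

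Next I would analyze $\Psi$ on $(0,\lambda)$. The function is continuous (by the continuity of $t\mapsto\mathrm{ATMSC}(N,q,t,\beta)$, which follows from monotonicity in $t$ and dominated convergence) and strictly positive, so non-attainment can only come from the supremum matching one of the endpoint limits. As $t\to\lambda^{-}$ the prefactor vanishes while $\mathrm{ATMSC}(N,q,t,\beta)\leq \mathrm{ATMSC}(N,q,\lambda,\beta)<\infty$, giving $\Psi(t)\to0$. As $t\to 0^{+}$ the leading Taylor term of $\Phi_{N,q,\beta}$ has index $j_{0}=\min\{j\in\N:\,j>q(N-1)(1-\beta/N)/N\}$ for $\beta>0$, and $j_{0}=\lceil q(N-1)/N\rceil$ for $\beta=0$; splitting off this term and using the estimate $\sum_{j>j_{0}}\frac{t^{j}}{j!}\int \frac{|u|^{Nj/(N-1)}}{F^{o}(x)^{\beta}}\,dx \leq (t/t_{1})^{j_{0}+1}\,\mathrm{ATMSC}(N,q,t_{1},\beta)\cdot\|u\|_{q}^{q(1-\beta/N)}$ for any fixed $t_{1}\in(t,\lambda)$ yields the sharp expansion $\mathrm{ATMSC}(N,q,t,\beta)=\frac{(t(1-\beta/N))^{j_{0}}}{j_{0}!}M_{j_{0}}+O(t^{j_{0}+1})$, where $M_{j_{0}}<\infty$ by Theorem~\ref{tha} applied to the power $p=Nj_{0}/(N-1)$. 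Combining with the prefactor asymptotics $\sim(\lambda/t)^{q(N-1)(1-\beta/N)/N}$: in cases (i) and (ii) the strict inequality $j_{0}>q(N-1)(1-\beta/N)/N$ forces $\Psi(t)\to 0$; in case (iii) the equality $Nj_{0}/(N-1)=q$ gives $M_{j_{0}}=1$ and $\Psi(t)\to \lambda^{q(N-1)/N}/(q(N-1)/N)!$.

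The three assertions then follow. In (i)--(ii) both endpoint limits vanish while $\Psi>0$ on the interior, so $\sup\Psi$ is attained at some $t_{0}\in(0,\lambda)$ and the first paragraph produces the desired maximizer, with Wulff symmetry of $|v|$ for $\beta>0$ coming from Theorem~\ref{th1.2}. In (iii), if $\Lambda_{a,b}(N,q,\lambda,0)=\sup\Psi$ strictly exceeds $\lambda^{q(N-1)/N}/(q(N-1)/N)!$ then $\sup\Psi$ exceeds both boundary limits and is again attained in the interior; contrapositively, non-attainment forces the upper bound asserted in the theorem. \textbf{The main obstacle} is the uniform small-$t$ asymptotics of $\mathrm{ATMSC}(N,q,t,\beta)$: isolating the leading Taylor coefficient demands that the higher-order remainder be $o(t^{j_{0}})$ \emph{uniformly} over the admissible class, and this in turn hinges on having a uniform upper bound on $\int\Phi_{N,q,\beta}(t_{1}(1-\beta/N)|u|^{N/(N-1)})/F^{o}(x)^{\beta}\,dx$ at some intermediate $t_{1}<\lambda$. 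This control is supplied by the subcritical inequality Theorem~\ref{tha}, but a careful dominated-convergence argument---together with continuity of $t\mapsto \mathrm{ATMSC}(N,q,t,\beta)$---is required to convert it into the precise limit $\lambda^{q(N-1)/N}/(q(N-1)/N)!$ needed for (iii).
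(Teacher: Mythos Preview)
Your proposal is correct and matches the paper's approach essentially step for step: reduce via \eqref{AA} to the one-variable function $\Psi$, prove continuity of $t\mapsto\mathrm{ATMSC}(N,q,t,\beta)$ (this is the paper's Lemma~\ref{le4.1}), show $\Psi(t)\to 0$ as $t\to\lambda^{-}$, and isolate the leading Taylor term of $\mathrm{ATMSC}$ as $t\to 0^{+}$ with the remainder controlled by Theorem~\ref{tha}, the borderline constant $\lambda^{q(N-1)/N}/(q(N-1)/N)!$ arising precisely when $j_{0}=q(N-1)/N$. The paper's construction of a maximizer of $\Lambda_{a,b}$ from an $\mathrm{ATMSC}$-maximizer by rescaling, and the Wulff symmetry via Theorem~\ref{th1.2}, are likewise the same as yours.
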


This paper is organized as follows. In Section \ref{section2}, we will provide some preliminary information and prove some lemmas which will be used to prove our theorems. In Section \ref{section3}, we provide the proofs about the maximizers for the subcritical anisotropic Trudinger-Moser inequality. Namely, we prove Theorem \ref{th1.1} and Theorem \ref{th1.2}. In Section \ref{section4}, we provide the proof of Theorem \ref{th1.3}.
\section{Preliminaries}\label{section2}
In this section, we provide some preliminary information that will be used in our proofs.

Let $F:\R^{N}\rightarrow[0,+\infty)$ be a convex function of class $C^{2}(\R^{N}\backslash\{0\})$, which is even and positively homogeneous of degree 1, then there holds
$$
F(tx)=\lvert t\rvert F(x)\;\;\mathrm{for}\;\mathrm{any}\;t\in\R,\;x\in\R^{N}.
$$

We further assume $F(x)>0$ for any $x\neq 0$ and $Hess(F^{2})$ is positive definite in $\R^{N}\backslash\{0\}$, which leading $Hess(F^{N})$ is positive definite in $\R^{N}\backslash\{0\}$ by Xie and Gong \cite{Xie-Gong-2016}. There are two constants $0<a\leq b<\infty$ such that $a\lvert x\rvert\leq F(x)\leq b\lvert x\rvert$ for any $x\in\R^{N}$ and a typical example of $F$ is $F(x):=(\sum\limits_{i=1}^{N}\lvert x_{i}\rvert^{q})^{\frac{1}{q}}$ for $q\in(1,+\infty)$.

Considering the minimization problem
$$
\min_{u}\int\limits_{\R^{N}}F(\nabla u)^{N}\;\mathrm{d}x,
$$
its Euler-Lagrange equation contains an operator of the form
$$
Q_{N}(u):=\sum_{i=1}^{N}\frac{\partial}{\partial x_{i}}(F(\nabla u)^{N-1}F_{x_{i}}(\nabla u)),
$$
which is called as $N$-anisotropic Laplacian or $N$-Finsler Laplacian.

Let $F^{o}$ be the support function of $K:=\{x\in\R^{N}:F(x)\leq 1\}$, which is defined by
$$
F^{o}(x):=\sup_{\xi\in K}\langle x,\xi\rangle,
$$
then $F^{o}:\R^{N}\rightarrow[0,+\infty)$ is also a convex, positively homogeneous function of class $C^{2}(\R^{N}\backslash\{0\})$.

From \cite{Alvino-Ferone-Trombetti-Lions-1997}, $F^{o}$ is dual to $F$ in the sense that
$$
F^{o}(x)=\sup_{\xi\neq 0}\frac{\langle x,\xi\rangle}{F(\xi)}, \;F(x)=\sup_{\xi\neq 0}\frac{\langle x,\xi\rangle}{F^{o}(\xi)}.
$$

Consider a map $\Phi:S^{N-1}\rightarrow\R^{N}$ satisfying $\Phi(\xi)=\nabla F(\xi)$. Its image $\Phi(S^{N-1})$ is a smooth, convex hypersurface in $\R^{N}$, which is known as the Wulff shape or equilibrium crystal shape of $F$. As a result, $\Phi(S^{N-1})=\{x\in\R^{N}|F^{o}(x)=1\}$ (see \cite{Wang-Xia-2011}, Proposition 1). Denote $W_{r}^{x_{0}}=\{x\in\R^{N}:F^{o}(x-x_{0})\leq r\}$, then we call $W_{r}$ as a Wulff ball with radius $r$ and center at the origin and denote $\kappa_{N}$ as the Lebesgue measure of $W_{1}$.

Accordingly, we provide some simple properties of $F$, as a direct consequence of assumption on $F$, also found in \cite{Bellettini-Paolini-1996,Ferone-Kawohl-2009,Wang-Xia-2011,Wang-Xia-2012}.
\begin{lemma}\label{le2.1}
There are\\
\textup{(i)} $\lvert F(x)-F(y)\rvert\leq F(x+y)\leq F(x)+F(y)$\textup{;}\\
\textup{(ii)} $\frac{1}{C}\leq\lvert\nabla F(x)\rvert\leq C$ and $\frac{1}{C}\leq \lvert\nabla F^{o}(x)\rvert\leq C$ for some $C>0$ and any $x\neq0$\textup{;}\\
\textup{(iii)} $\langle x,\nabla F(x)\rangle=F(x), \langle x, \nabla F^{o}(x)\rangle = F^{o}(x)$ for any $x\neq 0$\textup{;}\\
\textup{(iv)} $\nabla F(tx)=sgn(t)\nabla F(x)$ for any $x\neq 0$ and $t\neq 0$\textup{;}\\
\textup{(v)} $\int\limits_{\partial W_r}\frac{1}{|\nabla F^{o}(x)|}\;\mathrm{d}\sigma= N\kappa_Nr^{N-1}$\textup{;}\\
\textup{(vi)} $F(\nabla F^{o}(x))=1, F^{o}(\nabla F(x))=1$   for any $x\neq 0$\textup{;}\\
\textup{(vii)} $F(x)\nabla F^{o}(\nabla F(x))=x, F^{o}(x)\nabla F(\nabla F^{o}(x))=x$ for any $x\neq 0$.
\end{lemma}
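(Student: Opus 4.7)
The plan is to derive all seven identities from three inputs: the $1$-homogeneity and convexity of $F$, the Legendre/polar duality between $F$ and $F^o$, and the co-area formula. I would organize the argument in four short blocks.

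First I would dispatch (i)--(iv) as essentially formal consequences of homogeneity and convexity. For (i), $F$ is a seminorm because a convex positively $1$-homogeneous function with $F(0)=0$ satisfies $F(x+y)=2F(\tfrac{1}{2}x+\tfrac{1}{2}y)\le F(x)+F(y)$; the reverse triangle inequality then follows by writing $F(x)=F((x+y)+(-y))\le F(x+y)+F(-y)$ and using evenness $F(-y)=F(y)$. For (iii), I differentiate $F(tx)=tF(x)$ in $t$ at $t=1$ to obtain $\langle x,\nabla F(x)\rangle=F(x)$, and the same computation works for $F^o$. For (iv), I differentiate $F(tx)=|t|F(x)$ in $x$ via the chain rule. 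For (ii), observe that $\nabla F$ is $0$-homogeneous by (iv), hence determined by its values on the compact set $S^{N-1}$; continuity gives the upper bound, while if $\nabla F(x_0)=0$ for some $x_0\ne 0$ then (iii) would force $F(x_0)=0$, contradicting positivity of $F$ off the origin. The same reasoning handles $\nabla F^o$.

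Next I would derive (v) from the co-area formula applied to $u=F^o$. Using that $|\nabla F^o|>0$ off the origin by (ii), I have $\kappa_N r^N=|W_r|=\int_{F^o\le r}\tfrac{|\nabla F^o|}{|\nabla F^o|}\,dx=\int_{0}^{r}\int_{\partial W_s}\tfrac{1}{|\nabla F^o|}\,d\sigma\,ds$, and differentiating in $r$ produces the claimed identity $\int_{\partial W_r}\tfrac{1}{|\nabla F^o|}\,d\sigma=N\kappa_N r^{N-1}$.

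The core of the lemma lies in (vi) and (vii), which I would handle together via the duality representation $F^o(y)=\sup_{\xi\ne 0}\tfrac{\langle y,\xi\rangle}{F(\xi)}$ recalled just before the lemma. Fix $x\ne 0$ and set $y:=\nabla F(x)$. Convexity of $F$ gives the subgradient inequality $\langle y,\xi-x\rangle\le F(\xi)-F(x)$, which combined with Euler's identity (iii) becomes $\langle y,\xi\rangle\le F(\xi)$, i.e.\ $\tfrac{\langle y,\xi\rangle}{F(\xi)}\le 1$ with equality at $\xi=x$. This shows simultaneously that $F^o(\nabla F(x))=1$ (one half of (vi)) and that the maximizer in the supremum is $x$. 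Applying the envelope theorem to the parametrized maximum then yields $\nabla_y F^o(y)=x/F(x)$, which is $F(x)\nabla F^o(\nabla F(x))=x$ (one half of (vii)). The companion identities follow by symmetry, since the positive definiteness of $\operatorname{Hess}(F^2)$ guarantees strict convexity of $\{F\le 1\}$, hence of $\{F^o\le 1\}$ by duality, so $(F^o)^o=F$ and the roles of $F$ and $F^o$ may be interchanged.

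The main obstacle I anticipate is the rigorous justification of the envelope step in (vi)--(vii): it requires that the supremum defining $F^o(y)$ be attained at a unique $\xi$ up to positive scaling, so that the gradient on the left is computed from the single active constraint. This uniqueness is where the assumption that $\operatorname{Hess}(F^2)$ is positive definite off the origin enters essentially; once it is in hand, the remaining computations reduce to homogeneity bookkeeping.
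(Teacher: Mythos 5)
Your proposal is correct, and it is worth noting at the outset that the paper itself offers no proof of Lemma~2.1: it is stated ``as a direct consequence of assumption on $F$'' with citations to Bellettini--Paolini, Ferone--Kawohl and Wang--Xia, so your write-up supplies an argument the paper delegates to the literature. Your derivation is essentially the standard one found in those references: (i)--(iv) from $1$-homogeneity, midpoint convexity and Euler's relation (your bounds in (ii) are fine, and one can even get the lower bound quantitatively from (iii) via $F(x)\le|x|\,|\nabla F(x)|$ together with $a|x|\le F(x)$); (v) from the co-area formula, where $0$-homogeneity of $\nabla F^{o}$ makes the inner integral a constant multiple of $s^{N-1}$, so the differentiation in $r$ is immediate; and (vi)--(vii) from the subgradient inequality combined with Euler's identity, which is exactly the convex-duality mechanism. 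One refinement: the ``main obstacle'' you flag in the last paragraph is not actually an obstacle. You do not need uniqueness of the maximizing ray to justify the envelope step. Since the paper asserts $F^{o}\in C^{2}(\R^{N}\setminus\{0\})$, and $y=\nabla F(x)\neq 0$ by (ii), you can argue by touching: the affine-in-$y'$ function $g(y'):=\langle y',x\rangle/F(x)$ satisfies $g\le F^{o}$ everywhere (Cauchy--Schwarz in the Finsler sense, i.e.\ $\langle y',\xi\rangle\le F^{o}(y')F(\xi)$) with equality at $y'=y$; since both sides are differentiable at $y$, their gradients agree there, giving $\nabla F^{o}(y)=x/F(x)$ directly, with no Danskin-type hypothesis. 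The positive definiteness of $\operatorname{Hess}(F^{2})$ enters only implicitly, through the $C^{2}$ regularity of $F^{o}$ that the paper takes as given (differentiability of the polar is tied to strict convexity of the unit ball of $F$), and through the stated mutual polarity $F(x)=\sup_{\xi\neq0}\langle x,\xi\rangle/F^{o}(\xi)$, which licenses your ``by symmetry'' interchange of $F$ and $F^{o}$ without further argument.
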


The convex symmetrization defined in \cite{Alvino-Ferone-Trombetti-Lions-1997} will be used later, which generalizes the Schwarz symmetrization in \cite{Talenti-1976}. For any measurable set $\Omega\subset\R^{N}$, let us consider a measurable function $u$ on $\Omega$, one-dimensional decreasing rearrangement of $u$ is
$$
u^{\sharp}(t)=\sup\{s\geq 0:\lvert\{x\in\Omega:u(x)\geq s\}\rvert >t\},\;t\in\R.
$$
The convex symmetrization of $u$ with respect to $F$ is defined by
$$
u^{\star}(x)=u^{\sharp}\left(\kappa_{N}F^{o}(x)^{N}\right),\;x\in\Omega^{\star},
$$
where $\kappa_{N}F^{o}(x)^{N}$ is the Lebesgue measure of $W_{F^{o}(x)}$ and $\Omega^{\star}$ is a Wulff ball centered at the origin having the same Lebesgue measure as $\Omega$. Recalling the Schwarz symmetrization of $u$ is defined by
$$
u^{\diamond}(x)=u^{\sharp}(\omega_{N}\lvert x\rvert^{N}),\;\;\;x\in\Omega^{\diamond},
$$
where $\omega_{N}\lvert x\rvert^{N}$ is the Lebesgue measure of $B_{\lvert x\rvert}$ and $\Omega^{\diamond}$ is a Euclidean ball centered at the origin having the same Lebesgue measure as $\Omega$. It is easy to see that the Schwarz symmetrization is the convex symmetrization when $F=\lvert\cdot\rvert$.

In addition, the convex symmetrization has the following characteristic.
\begin{lemma}\label{le2.2}(P$\acute{o}$lya-Szeg$\ddot{o}$ Inequality)
Let $u\in D^{N,q}(\R^{N})$. Then $u^{\star}\in D^{N,q}(\R^{N})$ and
\begin{align}
    &\int\limits_{\R^{N}}F(\nabla u^{\star})^{N}\;\mathrm{d}x   \leq\int\limits_{\R^{N}}F(\nabla u)^{N}\;\mathrm{d}x.\notag
\end{align}
\end{lemma}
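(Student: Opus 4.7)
The plan is to adapt the classical P\'olya--Szeg\H{o} argument to the Finsler setting, following Alvino--Ferone--Trombetti--Lions. By density it suffices to treat $u\in C_{c}^{\infty}(\R^{N})$ with $u\ge 0$ (since $|u|^{\star}=u^{\star}$ and $|\nabla|u||=|\nabla u|$ a.e.), and then pass to the limit via lower semicontinuity of $v\mapsto\int F(\nabla v)^{N}dx$. Equimeasurability of $u$ and $u^{\star}$ gives $\|u^{\star}\|_{q}=\|u\|_{q}$ for free, so membership $u^{\star}\in D^{N,q}(\R^{N})$ follows once the gradient bound is in place. The two pillars are the coarea formula and the anisotropic (Wulff) isoperimetric inequality $\int_{\partial^{*}E}F(\nu_{E})\,d\mathcal{H}^{N-1}\ge N\kappa_{N}^{1/N}|E|^{(N-1)/N}$, with equality iff $E$ is a Wulff ball.

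Set $\mu(t):=|\{u>t\}|$; then $u^{\star}(x)=u^{\sharp}(\kappa_{N}F^{o}(x)^{N})$ and $u^{\star}$ is equimeasurable with $u$. By the coarea formula,
$$
\int_{\R^{N}}F(\nabla u)^{N}\,dx=\int_{0}^{\infty}\!\int_{\{u=t\}}\frac{F(\nabla u)^{N}}{|\nabla u|}\,d\mathcal{H}^{N-1}\,dt,
$$
and for a.e.\ $t$ one has $-\mu'(t)=\int_{\{u=t\}}|\nabla u|^{-1}\,d\mathcal{H}^{N-1}$. On $\{u=t\}$ the outward normal to $\{u>t\}$ is $\nu=-\nabla u/|\nabla u|$, so by the positive $1$-homogeneity of $F$ (Lemma \ref{le2.1}(iv)) and the Wulff inequality applied to $\{u>t\}$,
$$
\int_{\{u=t\}}\frac{F(\nabla u)}{|\nabla u|}\,d\mathcal{H}^{N-1}=\int_{\partial^{*}\{u>t\}}F(\nu)\,d\mathcal{H}^{N-1}\ge N\kappa_{N}^{1/N}\mu(t)^{(N-1)/N}.
$$
H\"older with exponents $N$ and $N/(N-1)$ applied to $\frac{F(\nabla u)}{|\nabla u|}=\bigl(\frac{F(\nabla u)^{N}}{|\nabla u|}\bigr)^{1/N}\bigl(\frac{1}{|\nabla u|}\bigr)^{(N-1)/N}$ then yields the pointwise level-set bound
$$
\int_{\{u=t\}}\frac{F(\nabla u)^{N}}{|\nabla u|}\,d\mathcal{H}^{N-1}\ge \frac{N^{N}\kappa_{N}\,\mu(t)^{N-1}}{(-\mu'(t))^{N-1}}.
$$

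For $u^{\star}$ both inequalities become equalities: the level sets $\{u^{\star}>t\}$ are Wulff balls (saturating the anisotropic isoperimetric inequality); and writing $u^{\star}(x)=u^{\sharp}(\kappa_{N}F^{o}(x)^{N})$ and invoking the chain rule together with $F(\nabla F^{o})\equiv 1$ (Lemma \ref{le2.1}(vi)) shows that $F(\nabla u^{\star})$ depends only on $F^{o}(x)$, hence is constant on each $\{u^{\star}=t\}$, so H\"older is an equality as well. Integrating in $t$ and using that $u$ and $u^{\star}$ share the distribution function $\mu$,
$$
\int_{\R^{N}}F(\nabla u)^{N}\,dx\ge\int_{0}^{\infty}\frac{N^{N}\kappa_{N}\,\mu(t)^{N-1}}{(-\mu'(t))^{N-1}}\,dt=\int_{\R^{N}}F(\nabla u^{\star})^{N}\,dx,
$$
which is the claim.

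The main technical obstacle is the rigorous handling of critical values of $u$ and of the set $\{\nabla u=0\}$, on which the coarea identity for $-\mu'(t)$ and the regularity of level sets can fail. I would deal with this by the standard device of truncating $u$ by $(u-\varepsilon)_{+}$ to avoid the plateau set, restricting to regular values via Sard's theorem (so the identities hold for a.e.\ $t$), and finally approximating a general $u\in D^{N,q}$ by smooth compactly supported functions and passing to the limit using lower semicontinuity of the anisotropic $N$-Dirichlet integral together with $L^{q}$-continuity of the convex symmetrization. These measure-theoretic details are precisely those carried out in Alvino--Ferone--Trombetti--Lions \cite{Alvino-Ferone-Trombetti-Lions-1997}, so my plan is to invoke their machinery at this step rather than re-prove it.
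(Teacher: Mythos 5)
The paper does not actually prove this lemma --- it is stated as a known fact imported from the convex-symmetrization framework of Alvino--Ferone--Trombetti--Lions \cite{Alvino-Ferone-Trombetti-Lions-1997}, which is precisely the machinery you invoke. Your sketch (coarea formula, the Wulff isoperimetric inequality applied to the level sets $\{u>t\}$, H\"older on each level set, and the verification that both inequalities are saturated by $u^{\star}$ because its level sets are Wulff balls and $F(\nabla F^{o})\equiv 1$ makes $F(\nabla u^{\star})$ constant on them) is the standard and correct argument, with the measure-theoretic caveats appropriately deferred to that reference.
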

\begin{lemma}\label{le2.3}(Hardy-Littlewood Inequality)
Let $f$, $g$ be nonnegative functions on $\R^{N}$, vanishing at infinity. Then
$$
\int\limits_{\R^{N}}f(x)g(x)\;\mathrm{d}x\leq \int\limits_{\R^{N}}f^{\star}(x)g^{\star}(x)\;\mathrm{d}x.
$$
Moreover, if $f$ is strictly decreasing and Wulff symmetric, then there is equality if and only if $g=g^{\star}$.
\end{lemma}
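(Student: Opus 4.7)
The plan is to prove this by the classical layer-cake argument, exploiting one structural fact specific to convex symmetrization: for any measurable $h$ vanishing at infinity, the super-level set $\{h^{\star}>s\}$ is the Wulff ball $W_{r(s)}$ centered at the origin, where $r(s)$ is determined by $\kappa_N r(s)^N = |\{h>s\}|$. In particular, the super-level sets of $f^{\star}$ and of $g^{\star}$ form two families of nested Wulff balls centered at the same point (the origin), and any two such balls are nested in one another.

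First I would write $f$ and $g$ via the layer-cake formula $h(x)=\int_0^{\infty}\chi_{\{h>s\}}(x)\,\mathrm ds$, then apply Tonelli's theorem to obtain
$$
\int_{\R^N} f(x)g(x)\,\mathrm dx=\int_0^{\infty}\!\!\int_0^{\infty}\bigl|\{f>s\}\cap\{g>t\}\bigr|\,\mathrm ds\,\mathrm dt,
$$
and likewise for $f^{\star}g^{\star}$. The trivial bound $|A\cap B|\le\min(|A|,|B|)$ applied on the left-hand side, together with the observation that on the right-hand side the sets $\{f^{\star}>s\}$ and $\{g^{\star}>t\}$ are concentric Wulff balls and hence satisfy $|\{f^{\star}>s\}\cap\{g^{\star}>t\}|=\min(|\{f^{\star}>s\}|,|\{g^{\star}>t\}|)$, combined with the equimeasurability $|\{h^{\star}>s\}|=|\{h>s\}|$ built into the definition of $\star$, yields the desired inequality after integrating in $(s,t)$.

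For the equality clause, assume $f$ is strictly decreasing and Wulff-symmetric, so $f=f^{\star}$ and the map $s\mapsto r(s)$ (defined by $|\{f>s\}|=\kappa_N r(s)^N$) is a strictly decreasing bijection of $(0,\sup f)$ onto some interval of radii, with $\{f>s\}=W_{r(s)}$ up to a null set. Equality in the overall integral forces pointwise equality in the measure bound for almost every pair $(s,t)$, i.e.\ $|W_{r(s)}\cap\{g>t\}|=\min(\kappa_N r(s)^N,|\{g>t\}|)$ for a.e.\ $(s,t)$. Fixing such a $t$ and letting $r(s)$ range over the dense set of admissible radii, I would choose $r$ so that $\kappa_N r^N=|\{g>t\}|$; this forces $\{g>t\}\subseteq W_r$ and $W_r\subseteq\{g>t\}$ modulo null sets, hence $\{g>t\}=W_r=\{g^{\star}>t\}$ up to null. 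Since this holds for a.e.\ $t$, the super-level sets of $g$ coincide a.e.\ with those of $g^{\star}$, and therefore $g=g^{\star}$ a.e. The converse direction (equality when $g=g^{\star}$) is immediate because then $fg=f^{\star}g^{\star}$ pointwise.

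I do not anticipate a serious obstacle: the whole argument is a direct transcription of the Euclidean Hardy–Littlewood proof, with balls $B_r$ replaced by Wulff balls $W_r$; the only substantive input is that level sets of any convex symmetrization are concentric Wulff balls, which is built into the definition $u^{\star}(x)=u^{\sharp}(\kappa_N F^o(x)^N)$. The mildest care is needed in the equality analysis, where I need $r(s)$ to take a dense set of values in order to conclude that every super-level set of $g$ is a Wulff ball; strict monotonicity of $f$ supplies exactly this.
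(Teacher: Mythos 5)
Your proof is correct in substance, but note that the paper itself offers no proof of Lemma \ref{le2.3}: it is quoted as a known preliminary (alongside the P\'olya--Szeg\H{o} inequality, Lemma \ref{le2.2}) from the convex-symmetrization literature, so your argument fills a gap rather than diverging from anything in the text. What you wrote is the standard Lieb--Loss layer-cake proof of the Hardy--Littlewood inequality transplanted to Wulff balls, and the transplantation is legitimate for exactly the reason you identify: by the definition $u^{\star}(x)=u^{\sharp}(\kappa_{N}F^{o}(x)^{N})$, the super-level sets $\{f^{\star}>s\}$ and $\{g^{\star}>t\}$ are Wulff balls centered at the origin, and since $F^{o}$ is a gauge these are nested, giving $\lvert\{f^{\star}>s\}\cap\{g^{\star}>t\}\rvert=\min\bigl(\lvert\{f^{\star}>s\}\rvert,\lvert\{g^{\star}>t\}\rvert\bigr)$; together with equimeasurability (valid for all but countably many levels, harmless under the $\mathrm{d}s\,\mathrm{d}t$ integration) and $\lvert A\cap B\rvert\leq\min(\lvert A\rvert,\lvert B\rvert)$, Tonelli yields the inequality.

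Two points in the equality analysis deserve tightening. First, to pass from equality of the double integrals to a.e.\ equality of the integrands you need $\int f^{\star}g^{\star}\,\mathrm{d}x<+\infty$ (otherwise both sides can be infinite without pointwise saturation); this finiteness should be stated as a hypothesis or verified in the applications, where it does hold. Second, your ``dense set of admissible radii'' step is slightly delicate: a full-measure set of levels $s$ need not map under $s\mapsto r(s)$ onto a dense set of radii if one only uses measure-theoretic information, since a strictly decreasing profile can in principle carry an interval of radii into a null set of levels. The fix is cheap and worth recording: strict monotonicity of $f$ gives $r(\varphi(\rho))=\rho$ for \emph{every} radius $\rho$ (where $f(x)=\varphi(F^{o}(x))$), so by injectivity of $\varphi$ all but countably many radii correspond to levels where saturation holds; moreover, for fixed $t$ the map $\rho\mapsto\lvert W_{\rho}\cap\{g>t\}\rvert$ is continuous, so equality at a dense set of radii propagates to the critical radius $r$ with $\kappa_{N}r^{N}=\lvert\{g>t\}\rvert$, and the two-sided squeeze you describe then gives $\{g>t\}=W_{r}=\{g^{\star}>t\}$ up to null sets, whence $g=g^{\star}$ a.e.\ by the layer-cake formula. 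The converse direction is, as you say, immediate from $f=f^{\star}$.
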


Next, we prove some lemmas that will be used.

\begin{lemma}\label{le2.4}
Let $\Omega\subset\R^{N}$, $\lvert\Omega\rvert<+\infty$. Assume that
$$
f_{n}\rightarrow f\;a.e.\;in\;\Omega
$$
and there exists $q>1$ such that $f_{n}$ is uniformly bounded in $L^{q}(\Omega)$, $f\in L^{q}(\Omega)$. Then
$$
f_{n}\rightarrow f\;in\;L^{1}(\Omega).
$$
\end{lemma}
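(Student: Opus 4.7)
The plan is to recognize this as essentially Vitali's convergence theorem and prove it by combining Egorov's theorem with a uniform-integrability estimate that comes directly from the $L^{q}$ bound. First I would observe that by H\"{o}lder's inequality, for any measurable $E\subset\Omega$,
$$
\int_{E}|f_{n}-f|\;\mathrm{d}x\leq |E|^{1-\frac{1}{q}}\bigl(\|f_{n}\|_{q}+\|f\|_{q}\bigr)\leq C|E|^{1-\frac{1}{q}},
$$
where $C$ is independent of $n$ because $\{f_{n}\}$ is uniformly bounded in $L^{q}(\Omega)$ and $f\in L^{q}(\Omega)$. The exponent $1-\frac{1}{q}$ is strictly positive precisely because $q>1$, which is what supplies the uniform integrability.

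Next, given $\varepsilon>0$, I would choose $\delta>0$ so small that $C\delta^{1-1/q}<\varepsilon/2$. Since $|\Omega|<+\infty$ and $f_{n}\to f$ almost everywhere, Egorov's theorem furnishes a measurable set $E\subset\Omega$ with $|\Omega\setminus E|<\delta$ on which $f_{n}\to f$ uniformly. Splitting the integral as
$$
\int_{\Omega}|f_{n}-f|\;\mathrm{d}x=\int_{E}|f_{n}-f|\;\mathrm{d}x+\int_{\Omega\setminus E}|f_{n}-f|\;\mathrm{d}x,
$$
the first term is bounded by $|\Omega|\sup_{E}|f_{n}-f|$ and tends to $0$ as $n\to\infty$ by uniform convergence, while the second is at most $\varepsilon/2$ uniformly in $n$ by the H\"{o}lder estimate. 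Letting $n\to\infty$ and then $\varepsilon\to 0$ yields $f_{n}\to f$ in $L^{1}(\Omega)$.

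There is no substantive obstacle; the argument is routine. The structural point worth flagging is that the hypothesis $q>1$ is used in exactly one place, namely to make the exponent $1-1/q$ positive so that the H\"{o}lder bound controls concentration onto small sets; without this, a.e.\ convergence plus an $L^{1}$ bound alone would not prevent mass from escaping as in the familiar ``travelling bump'' examples. The assumption $f\in L^{q}(\Omega)$ is in fact redundant, since Fatou's lemma gives $\|f\|_{q}\leq\liminf_{n}\|f_{n}\|_{q}<+\infty$ automatically, but keeping it makes the H\"{o}lder splitting above entirely transparent.
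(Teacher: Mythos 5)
Your proposal is correct and follows essentially the same route as the paper's own proof: Egorov's theorem to get uniform convergence off a small exceptional set, plus H\"{o}lder's inequality with the uniform $L^{q}$ bound (note $1-\tfrac{1}{q}=\tfrac{1}{q^{\ast}}$, so your estimate on $\Omega\setminus E$ is identical to the paper's). Your added remarks on where $q>1$ enters and on the redundancy of the hypothesis $f\in L^{q}(\Omega)$ via Fatou are accurate but not needed.
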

\begin{proof}
For any $\epsilon>0$, by Egorov's theorem, there exists a measurable set $E\subset\Omega$ such that
$$
f_{n}\rightarrow f\;\mathrm{uniformly}\;\mathrm{in}\;E\;\mathrm{and}\;\lvert\Omega\backslash E\rvert<\epsilon.
$$
Thus,
$$
\int\limits_{E}\lvert f_{n}-f\rvert\;\mathrm{d}x\rightarrow0.
$$
On the other hand, by Holder's inequality,
$$
\int\limits_{\Omega\backslash E}\lvert f_{n}-f\rvert\;\mathrm{d}x\leq\bigg(\int\limits_{ \Omega\backslash E}\lvert f_{n}-f\rvert^{q}\;\mathrm{d}x\bigg)^{1/q}\bigg(\int\limits_{ \Omega\backslash E}1^{q^{\ast}}\;\mathrm{d}x\bigg)^{1/q^{\ast}}\leq C\epsilon^{1/q^{\ast}},
$$
where $\frac{1}{q}+\frac{1}{q^{\ast}}=1$. Therefore, $f_{n}\rightarrow f$ in $L^{1}(\Omega)$.
\end{proof}

\begin{lemma}\label{le2.5}
Let $p>q(1-\frac{\beta}{N})$ if $\beta\neq0$ and $p\geq q$ if $\beta=0$. Then
\begin{align}
    \mathrm{ATMSC}(N,p,q,\lambda,\beta)&=\sup\limits_{\|F(\nabla u)\|_{N}=\|u\|_{q}=1}\int\limits_{\R^{N}}\frac{\exp(\lambda(1-\frac{\beta}{N})\lvert u\rvert^{\frac{N}{N-1}})\lvert u\rvert^{p}}{F^{o}(x)^{\beta}}\;\mathrm{d}x,\notag\\
    \mathrm{ATMSC}(N,q,\lambda,\beta)&=\sup\limits_{\|F(\nabla u)\|_{N}=\|u\|_{q}=1}\int\limits_{\R^{N}}\frac{\Phi_{N,q,\beta}(\lambda(1-\frac{\beta}{N})\lvert u\rvert^{\frac{N}{N-1}})}{F^{o}(x)^{\beta}}\;\mathrm{d}x.\notag
\end{align}
\end{lemma}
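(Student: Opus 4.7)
The plan is to prove both identities by showing that any admissible $u$ in the larger constraint set $\{\|F(\nabla u)\|_{N}\leq 1\}$ can be modified into a function $v$ in the smaller set $\{\|F(\nabla u)\|_{N}=\|u\|_{q}=1\}$ without decreasing the functional; combined with the trivial inequality from inclusion of constraint sets, this yields equality. The reduction proceeds in two steps: gradient normalization, followed by exploitation of scale invariance.

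Step 1 (gradient normalization). Fix $u\not\equiv 0$ and set $c:=\|F(\nabla u)\|_{N}\in(0,1]$. Let $v:=u/c$, so $\|F(\nabla v)\|_{N}=1$ and $|v|^{N/(N-1)}=c^{-N/(N-1)}|u|^{N/(N-1)}\geq|u|^{N/(N-1)}$ pointwise. For the exponential functional, collecting $c$-powers from $|v|^{p}$ and $\|v\|_{q}^{q(1-\beta/N)}$, and using monotonicity of $\exp$, yields
$$
\frac{1}{\|v\|_{q}^{q(1-\beta/N)}}\int_{\R^{N}}\frac{\exp(\lambda(1-\tfrac{\beta}{N})|v|^{N/(N-1)})|v|^{p}}{F^{o}(x)^{\beta}}\,\mathrm{d}x\;\geq\; c^{\,q(1-\beta/N)-p}\cdot(\text{same expression with }u),
$$
which is $\geq$ the original because $p>q(1-\beta/N)$ and $c\leq 1$. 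For the functional involving $\Phi_{N,q,\beta}$, I compare termwise: the $j$-th term picks up a factor $c^{\,q(1-\beta/N)-Nj/(N-1)}$, and this factor is $\geq 1$ precisely when $j\geq q(N-1)(1-\beta/N)/N$. The indexing range in the definition of $\Phi_{N,q,\beta}$ is exactly this range (strict when $\beta>0$, non-strict when $\beta=0$), so every term in the series is non-decreasing, hence so is the integral.

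Step 2 (scale invariance). With $\|F(\nabla v)\|_{N}=1$ in hand, I apply the dilation $v_{\tau}(x):=v(\tau x)$ for $\tau>0$. Positive $1$-homogeneity of $F$ and $F^{o}$ together with the change of variable $y=\tau x$ in dimension $N$ give
$$
\|F(\nabla v_{\tau})\|_{N}=\|F(\nabla v)\|_{N}=1,\qquad \|v_{\tau}\|_{q}=\tau^{-N/q}\|v\|_{q},
$$
while every singular integral (both the exponential version and the $\Phi_{N,q,\beta}$ version) picks up a common prefactor $\tau^{\beta-N}$. Combined with the factor $\tau^{N-\beta}$ arising from $\|v_{\tau}\|_{q}^{-q(1-\beta/N)}$, the $\tau$-dependence cancels, so both functionals are invariant under $v\mapsto v_{\tau}$. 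Choosing $\tau:=\|v\|_{q}^{q/N}$ forces $\|v_{\tau}\|_{q}=1$ while preserving $\|F(\nabla v_{\tau})\|_{N}=1$, completing the reduction.

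The only delicate point is the termwise comparison for $\Phi_{N,q,\beta}$ in Step 1: the index condition in its definition is calibrated precisely so that gradient normalization does not shrink any term of the power series, which also explains why the case distinction $\beta>0$ versus $\beta=0$ appears in the definition. Everything else is a routine application of the homogeneity properties of $F$, $F^{o}$ and of the change-of-variable formula.
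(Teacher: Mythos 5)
Your proof is correct and follows essentially the same route as the paper: the paper performs your two steps in one stroke via the single substitution $v(x)=u(tx)/\|F(\nabla u)\|_{N}$ with $t=(\|u\|_{q}/\|F(\nabla u)\|_{N})^{q/N}$, exploiting exactly the same monotonicity from $p>q(1-\frac{\beta}{N})$ (resp.\ $p\geq q$) and $\|F(\nabla u)\|_{N}\leq 1$. Your explicit termwise check for the $\Phi_{N,q,\beta}$ series is a welcome addition, since the paper only proves the first identity and leaves the second to the reader.
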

\begin{proof}
We just prove the first identity. For any $u\in D^{N,q}(\R^{N})$ with $\|F(\nabla u)\|_{N}\leq1$, let
$$
v(x)=\frac{u(tx)}{\|F(\nabla u)\|_{N}},\;t=\left(\frac{\|u\|_{q}}{\|F(\nabla u)\|_{N}}\right)^{q/N},
$$
where we can easily exclude the case $\|F(\nabla u)\|_{N}=0$. There are
$$
\|F(\nabla v)\|_{N}=1,\;\|v\|_{q}=1,
$$
and
\begin{align}
    &\int\limits_{\R^{N}}\frac{\exp(\lambda(1-\frac{\beta}{N})\lvert v\rvert^{\frac{N}{N-1}})\lvert v\rvert^{p}}{F^{o}(x)^{\beta}}\;\mathrm{d}x\notag\\
    =&\frac{1}{\|F(\nabla u)\|_{N}^{p}}\int\limits_{\R^{N}}\exp\left(\lambda(1-\frac{\beta}{N})(\frac{\lvert u(tx)\rvert}{\|F(\nabla u)\|_{N}})^{\frac{N}{N-1}}\right)\frac{\lvert u(tx)\rvert^{p}}{F^{o}(x)^{\beta}}\;\mathrm{d}x\notag\\
    =&\frac{1}{\|F(\nabla u)\|_{N}^{p}}\frac{1}{t^{N-\beta}}\int\limits_{\R^{N}}\exp\left(\lambda(1-\frac{\beta}{N})(\frac{\lvert u(x)\rvert}{\|F(\nabla u)\|_{N}})^{\frac{N}{N-1}}\right)\frac{\lvert u(x)\rvert^{p}}{F^{o}(x)^{\beta}}\;\mathrm{d}x\notag\\
    =&\frac{1}{\|F(\nabla u)\|_{N}^{p-q(1-\frac{\beta}{N})}}\frac{1}{\|u\|_{q}^{q(1-\frac{\beta}{N})}}\int\limits_{\R^{N}}\exp\left(\lambda(1-\frac{\beta}{N})(\frac{\lvert u(x)\rvert}{\|F(\nabla u)\|_{N}})^{\frac{N}{N-1}}\right)\frac{\lvert u(x)\rvert^{p}}{F^{o}(x)^{\beta}}\;\mathrm{d}x\notag\\
    \geq&\frac{1}{\|u\|_{q}^{q(1-\frac{\beta}{N})}}\int\limits_{\R^{N}}\frac{\exp(\lambda(1-\frac{\beta}{N})\lvert u\rvert^{\frac{N}{N-1}})\lvert u\rvert^{p}}{F^{o}(x)^{\beta}}\;\mathrm{d}x.\notag
\end{align}
\end{proof}

\begin{lemma}\label{le2.6}
Let $a>0$ and $b>0$. Then
$$
\Lambda_{a,b}(N,q,\lambda,\beta)=\sup\limits_{\|F(\nabla u)\|_{N}^{a}+\|u\|_{q}^{b}=1}\int\limits_{\R^{N}}\frac{\Phi_{N,q,\beta}(\lambda(1-\frac{\beta}{N})\lvert u\rvert^{\frac{N}{N-1}})}{F^{o}(x)^{\beta}}\;\mathrm{d}x.
$$
\end{lemma}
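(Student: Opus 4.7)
The plan is to reduce the inequality-constrained supremum to the equality-constrained one via a simple dilation by a positive scalar. Since the equality set is contained in the inequality set, the direction $\sup_{=1} \le \Lambda_{a,b}(N,q,\lambda,\beta)$ is immediate; the real task is the reverse inequality.

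First I would pick an arbitrary admissible $u \in D^{N,q}(\mathbb{R}^N)$ with $\|F(\nabla u)\|_{N}^{a}+\|u\|_{q}^{b}\leq 1$. The trivial case $u \equiv 0$ contributes $0$ to the supremum and can be discarded. Otherwise, since $u \in D^{N,q}(\mathbb{R}^N)$ and $u \not\equiv 0$, one has $\|u\|_q > 0$, so the continuous map
\[
f(\mu) := \mu^{a}\|F(\nabla u)\|_{N}^{a}+\mu^{b}\|u\|_{q}^{b}
\]
satisfies $f(1)\le 1$ and $f(\mu)\to +\infty$ as $\mu\to +\infty$. By the intermediate value theorem there exists $\mu^{*} \ge 1$ with $f(\mu^{*})=1$. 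Define $v := \mu^{*} u$, so that $\|F(\nabla v)\|_{N}^{a}+\|v\|_{q}^{b}=1$.

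Next I would use the crucial monotonicity property of $\Phi_{N,q,\beta}$: by its very definition as a power series with non-negative coefficients (either $\sum_{j>q(N-1)(1-\beta/N)/N} t^j/j!$ or $\sum_{j\ge q(N-1)/N} t^j/j!$), the function $t \mapsto \Phi_{N,q,\beta}(t)$ is non-decreasing on $[0,+\infty)$. Since $|v(x)|=\mu^{*}|u(x)|\ge |u(x)|$ pointwise and $\mu^{*}\ge 1$, we have
\[
\lambda\Bigl(1-\tfrac{\beta}{N}\Bigr)|v(x)|^{\frac{N}{N-1}} \ge \lambda\Bigl(1-\tfrac{\beta}{N}\Bigr)|u(x)|^{\frac{N}{N-1}} \ge 0,
\]
so the integrand for $v$ dominates that for $u$ pointwise, and hence
\[
\int_{\mathbb{R}^{N}}\frac{\Phi_{N,q,\beta}(\lambda(1-\frac{\beta}{N})|v|^{\frac{N}{N-1}})}{F^{o}(x)^{\beta}}\,\mathrm{d}x \;\ge\; \int_{\mathbb{R}^{N}}\frac{\Phi_{N,q,\beta}(\lambda(1-\frac{\beta}{N})|u|^{\frac{N}{N-1}})}{F^{o}(x)^{\beta}}\,\mathrm{d}x.
\]

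Finally, taking the supremum over $u$ in the inequality set on the right and noting that the left-hand side is bounded by the supremum over the equality set, I conclude that the two suprema coincide. There is no genuine obstacle here: the only small care needed is to verify monotonicity of $\Phi_{N,q,\beta}$ (automatic from the series definition) and to guarantee that the dilation parameter $\mu^{*}$ exists, which requires only $\|u\|_q > 0$ — a property implicit in any non-trivial admissible $u$ since constants lie outside $D^{N,q}(\mathbb{R}^N)$.
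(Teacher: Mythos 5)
Your proposal is correct and follows essentially the same route as the paper: for a nonzero admissible $u$ with $\|F(\nabla u)\|_{N}^{a}+\|u\|_{q}^{b}\leq 1$, rescale by a constant $c\geq 1$ chosen so that the constraint holds with equality, and use the monotonicity of $\Phi_{N,q,\beta}$ (nonnegative Taylor coefficients) to see that the integral does not decrease. You merely make explicit two points the paper leaves implicit, namely the intermediate value argument for the scaling constant and the pointwise monotonicity of the integrand.
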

\begin{proof}
For any $u\in D^{N,q}(\R^{N})$ with $\|F(\nabla u)\|_{N}^{a}+\|u\|_{q}^{b}\leq1$, we can choose some $c\geq1$ such that $c^{a}\|F(\nabla u)\|_{N}^{a}+c^{b}\|u\|_{q}^{b}=1$. There is
$$
\int\limits_{\R^{N}}\frac{\Phi_{N,q,\beta}(\lambda(1-\frac{\beta}{N})\lvert cu\rvert^{\frac{N}{N-1}})}{F^{o}(x)^{\beta}}\;\mathrm{d}x\geq\int\limits_{\R^{N}}\frac{\Phi_{N,q,\beta}(\lambda(1-\frac{\beta}{N})\lvert u\rvert^{\frac{N}{N-1}})}{F^{o}(x)^{\beta}}\;\mathrm{d}x.
$$
\end{proof}

\section{Maximizers for the subcritical anisotropic Trudinger-Moser inequality}\label{section3}

In this section, we will prove the existence of maximizers for the subcritical anisotropic Trudinger-Moser supremums $\mathrm{ATMSC}(N,p,q,\lambda,\beta)$ and $\mathrm{ATMSC}(N,q,\lambda,\beta)$, and we will show that the absolute values of the maximizers are Wulff symmetric in the singular case.

\subsection{Proof of Theorem \ref{th1.1}}
\begin{proof}
Let $\{\omega_{n}\}$ be a maximizing sequence of
$$
\mathrm{ATMSC}(N,p,q,\lambda,\beta)=\sup\limits_{\|F(\nabla u)\|_{N}\leq 1}\frac{1}{\|u\|_{q}^{q(1-\frac{\beta}{N})}}\int\limits_{\R^{N}}\frac{\exp(\lambda(1-\frac{\beta}{N})\lvert u\rvert^{\frac{N}{N-1}})\lvert u\rvert^{p}}{F^{o}(x)^{\beta}}\;\mathrm{d}x.
$$
Then by Lemma \ref{le2.2}, Lemma \ref{le2.3} and Lemma \ref{le2.5}, we may assume that $\omega_{n}$ is nonnegative, Wulff symmetric and
$$
\int\limits_{\R^{N}}\frac{\exp(\lambda(1-\frac{\beta}{N})\lvert \omega_{n}\rvert^{\frac{N}{N-1}})\lvert \omega_{n}\rvert^{p}}{F^{o}(x)^{\beta}}\;\mathrm{d}x\rightarrow\mathrm{ATMSC}(N,p,q,\lambda,\beta)
$$
with
$$
\|F(\nabla\omega_{n})\|_{N}=\|\omega_{n}\|_{q}=1.
$$
For $q>1$, up to a subsequence, there exists some $\omega\in D^{N,q}(\R^{N})$ such that
\begin{align}
    &\omega_{n}\rightharpoonup \omega\;\;\mathrm{weakly}\;\mathrm{in}\;D^{N,q}(\R^{N})\textup{;}\notag\\
    &\omega_{n}\rightarrow \omega\;\;\mathrm{strongly}\;\mathrm{in}\;L^{N}(\R^{N})\;\mathrm{and}\;L_{\mathrm{loc}}^{i}(\R^{N})\;\mathrm{for}\;i\in(0,+\infty)\textup{;}\notag\\
    &\omega_{n}\rightarrow \omega\;\;\mathrm{a.e.}\;\mathrm{in}\;\R^{N}.\notag
\end{align}
Obviously, $\omega$ is nonnegative, Wulff symmetric and
$$
\|F(\nabla\omega)\|_{N}\leq1,\;\|\omega\|_{q}\leq1.
$$

{\bfseries Case 1:} $\beta=0$, $p>q$.

For any $x\not=0$,
\begin{align}
    \|\omega_{n}\|_{q}^{q}\geq\int_{0}^{F^{o}(x)}\omega_{n}(r)^{q}\int_{\partial W_{r}}\frac{1}{\lvert\nabla F^{o}(x)\rvert}\;\mathrm{d}\sigma\mathrm{d}r\geq\kappa_{N}F^{o}(x)^{N}\omega_{n}(F^{o}(x))^{q}.\notag
\end{align}
Thus, for any $\epsilon>0$, there exists $R>0$ sufficiently large such that $\lvert\omega_{n}(x)\rvert\leq\epsilon$ when $F^{o}(x)\geq R$. Then by Theorem \ref{tha},
\begin{align}
    \int\limits_{\R^{N}\backslash W_{R}}\exp(\lambda\lvert \omega_{n}\rvert^{\frac{N}{N-1}})\lvert \omega_{n}\rvert^{p}\;\mathrm{d}x&\leq\epsilon^{p-q}\int\limits_{\R^{N}\backslash W_{R}}\exp(\lambda\lvert \omega_{n}\rvert^{\frac{N}{N-1}})\lvert \omega_{n}\rvert^{q}\;\mathrm{d}x\notag\\
    &\leq C(N,q,\lambda)\epsilon^{p-q}.\notag
\end{align}
Next, notice that
$$
\exp(\lambda\lvert \omega_{n}\rvert^{\frac{N}{N-1}})\lvert \omega_{n}\rvert^{p}\rightarrow\exp(\lambda\lvert \omega\rvert^{\frac{N}{N-1}})\lvert \omega\rvert^{p}\;\mathrm{a.e.}\;\mathrm{in}\;W_{R}
$$
and by Theorem \ref{tha}, there exists $\theta>0$ sufficiently small such that $\exp(\lambda\lvert \omega_{n}\rvert^{\frac{N}{N-1}})\lvert \omega_{n}\rvert^{p}$ is uniformly bounded in $L^{1+\theta}(W_{R})$. Thus, by Lemma \ref{le2.4},
$$
\int\limits_{W_{R}}\exp(\lambda\lvert \omega_{n}\rvert^{\frac{N}{N-1}})\lvert \omega_{n}\rvert^{p}\;\mathrm{d}x\rightarrow\int\limits_{W_{R}}\exp(\lambda\lvert \omega\rvert^{\frac{N}{N-1}})\lvert \omega\rvert^{p}\;\mathrm{d}x.
$$
Hence,
\begin{align}
    \mathrm{ATMSC}(N,p,q,\lambda,\beta)\leftarrow&\int\limits_{\R^{N}}\exp(\lambda\lvert \omega_{n}\rvert^{\frac{N}{N-1}})\lvert \omega_{n}\rvert^{p}\;\mathrm{d}x\notag\\
    \leq&\int\limits_{\R^{N}}\exp(\lambda\lvert \omega\rvert^{\frac{N}{N-1}})\lvert \omega\rvert^{p}\;\mathrm{d}x+C(N,q,\lambda)\epsilon^{p-q}.\notag
\end{align}
Since $\epsilon>0$ is arbitrary, we obtain
$$
\mathrm{ATMSC}(N,p,q,\lambda,\beta)\leq\int\limits_{\R^{N}}\exp(\lambda\lvert \omega\rvert^{\frac{N}{N-1}})\lvert \omega\rvert^{p}\;\mathrm{d}x.
$$
Obviously, $\omega\not=0$ and
\begin{align}
    \mathrm{ATMSC}(N,p,q,\lambda,\beta)&\leq\int\limits_{\R^{N}}\exp(\lambda\lvert \omega\rvert^{\frac{N}{N-1}})\lvert \omega\rvert^{p}\;\mathrm{d}x\notag\\
    &\leq\frac{1}{\|\omega\|_{q}^{q}}\int\limits_{\R^{N}}\exp(\lambda\lvert \omega\rvert^{\frac{N}{N-1}})\lvert \omega\rvert^{p}\;\mathrm{d}x.\notag
\end{align}
Therefore, $\|\omega\|_{q}=1$ and
$$
\mathrm{ATMSC}(N,p,q,\lambda,\beta)=\int\limits_{\R^{N}}\exp(\lambda\lvert \omega\rvert^{\frac{N}{N-1}})\lvert \omega\rvert^{p}\;\mathrm{d}x.
$$
Moreover, $\omega$ is a maximizer of $\mathrm{ATMSC}(N,p,q,\lambda,\beta)$ with $\|F(\nabla\omega)\|_{N}\leq1$.

{\bfseries Case 2:} $\beta=0$, $p=q$.

Similarly as in the first case, we could show that for any $\epsilon>0$, there exists $R>0$ sufficiently large such that $\lvert\omega_{n}(x)\rvert\leq\epsilon$ when $F^{o}(x)\geq R$ and
$$
\int\limits_{W_{R}}\left[\exp(\lambda\lvert \omega_{n}\rvert^{\frac{N}{N-1}})-1\right]\lvert \omega_{n}\rvert^{q}\;\mathrm{d}x\rightarrow\int\limits_{W_{R}}\left[\exp(\lambda\lvert \omega\rvert^{\frac{N}{N-1}})-1\right]\lvert \omega\rvert^{q}\;\mathrm{d}x.
$$
Meanwhile, by Theorem \ref{tha} and $e^{t}\leq1+te^{t}$ for $t\geq0$,
\begin{align}
    \int\limits_{\R^{N}\backslash W_{R}}\left[\exp(\lambda\lvert \omega_{n}\rvert^{\frac{N}{N-1}})-1\right]\lvert \omega_{n}\rvert^{q}\;\mathrm{d}x&\leq\lambda\int\limits_{\R^{N}\backslash W_{R}}\exp(\lambda\lvert \omega_{n}\rvert^{\frac{N}{N-1}})\lvert \omega_{n}\rvert^{q+\frac{N}{N-1}}\;\mathrm{d}x\notag\\
    &\leq C(N,q,\lambda)\epsilon^{\frac{N}{N-1}}.\notag
\end{align}
Thus,
\begin{align}
    \mathrm{ATMSC}(N,p,q,\lambda,\beta)\leftarrow&\int\limits_{\R^{N}}\exp(\lambda\lvert \omega_{n}\rvert^{\frac{N}{N-1}})\lvert \omega_{n}\rvert^{q}\;\mathrm{d}x\notag\\
    =&\int\limits_{\R^{N}}\left[\exp(\lambda\lvert \omega_{n}\rvert^{\frac{N}{N-1}})-1\right]\lvert \omega_{n}\rvert^{q}\;\mathrm{d}x+\|\omega_{n}\|_{q}^{q}\notag\\
    \leq&\int\limits_{\R^{N}}\left[\exp(\lambda\lvert \omega\rvert^{\frac{N}{N-1}})-1\right]\lvert \omega\rvert^{q}\;\mathrm{d}x+C(N,q,\lambda)\epsilon^{\frac{N}{N-1}}+1.\notag
\end{align}
Since $\epsilon>0$ is arbitrary, we obtain
$$
\mathrm{ATMSC}(N,p,q,\lambda,\beta)\leq\int\limits_{\R^{N}}\left[\exp(\lambda\lvert \omega\rvert^{\frac{N}{N-1}})-1\right]\lvert \omega\rvert^{q}\;\mathrm{d}x+1.
$$
Obviously, $\omega\not=0$ and
\begin{align}
    \mathrm{ATMSC}(N,p,q,\lambda,\beta)&\leq\frac{1}{\|\omega\|_{q}^{q}}\int\limits_{\R^{N}}\left[\exp(\lambda\lvert \omega\rvert^{\frac{N}{N-1}})-1\right]\lvert \omega\rvert^{q}\;\mathrm{d}x+1\notag\\
    &=\frac{1}{\|\omega\|_{q}^{q}}\int\limits_{\R^{N}}\exp(\lambda\lvert \omega\rvert^{\frac{N}{N-1}})\lvert \omega\rvert^{q}\;\mathrm{d}x.\notag
\end{align}
Therefore, $\|\omega\|_{q}=1$ and
$$
\mathrm{ATMSC}(N,p,q,\lambda,\beta)=\int\limits_{\R^{N}}\exp(\lambda\lvert \omega\rvert^{\frac{N}{N-1}})\lvert \omega\rvert^{q}\;\mathrm{d}x.
$$
Moreover, $\omega$ is a maximizer of $\mathrm{ATMSC}(N,p,q,\lambda,\beta)$ with $\|F(\nabla\omega)\|_{N}\leq1$.

{\bfseries Case 3:} $0<\beta<N$.

Similarly as in the first case, we could show that for any $\epsilon>0$, there exists $R>0$ sufficiently large such that $\lvert\omega_{n}(x)\rvert\leq\epsilon$ when $F^{o}(x)\geq R$ and
$$
\int\limits_{W_{R}}\frac{\exp(\lambda(1-\frac{\beta}{N})\lvert \omega_{n}\rvert^{\frac{N}{N-1}})\lvert \omega_{n}\rvert^{p}}{F^{o}(x)^{\beta}}\;\mathrm{d}x\rightarrow\int\limits_{W_{R}}\frac{\exp(\lambda(1-\frac{\beta}{N})\lvert \omega\rvert^{\frac{N}{N-1}})\lvert \omega\rvert^{p}}{F^{o}(x)^{\beta}}\;\mathrm{d}x.
$$
Meanwhile, for $\delta=\delta(N,p,q,\beta)>0$ sufficiently small, by Theorem \ref{tha},
\begin{align}
    \int\limits_{\R^{N}\backslash W_{R}}\frac{\exp(\lambda(1-\frac{\beta}{N})\lvert \omega_{n}\rvert^{\frac{N}{N-1}})\lvert \omega_{n}\rvert^{p}}{F^{o}(x)^{\beta}}\;\mathrm{d}x&\leq\frac{1}{R^{\delta}}\int\limits_{\R^{N}\backslash W_{R}}\frac{\exp(\lambda(1-\frac{\beta}{N})\lvert \omega_{n}\rvert^{\frac{N}{N-1}})\lvert \omega_{n}\rvert^{p}}{F^{o}(x)^{\beta-\delta}}\;\mathrm{d}x\notag\\
    &\leq C(N,p,q,\lambda,\beta)\frac{1}{R^{\delta}}.\notag
\end{align}
Thus,
\begin{align}
    \mathrm{ATMSC}(N,p,q,\lambda,\beta)\leftarrow&\int\limits_{\R^{N}}\frac{\exp(\lambda(1-\frac{\beta}{N})\lvert \omega_{n}\rvert^{\frac{N}{N-1}})\lvert \omega_{n}\rvert^{p}}{F^{o}(x)^{\beta}}\;\mathrm{d}x\notag\\
    \leq&\int\limits_{\R^{N}}\frac{\exp(\lambda(1-\frac{\beta}{N})\lvert \omega\rvert^{\frac{N}{N-1}})\lvert \omega\rvert^{p}}{F^{o}(x)^{\beta}}\;\mathrm{d}x+C(N,p,q,\lambda,\beta)\frac{1}{R^{\delta}}.\notag
\end{align}
Since $R>0$ is sufficiently large, we obtain
$$
\mathrm{ATMSC}(N,p,q,\lambda,\beta)\leq\int\limits_{\R^{N}}\frac{\exp(\lambda(1-\frac{\beta}{N})\lvert \omega\rvert^{\frac{N}{N-1}})\lvert \omega\rvert^{p}}{F^{o}(x)^{\beta}}\;\mathrm{d}x.
$$
Obviously, $\omega\not=0$ and
\begin{align}
    \mathrm{ATMSC}(N,p,q,\lambda,\beta)&\leq\int\limits_{\R^{N}}\frac{\exp(\lambda(1-\frac{\beta}{N})\lvert \omega\rvert^{\frac{N}{N-1}})\lvert \omega\rvert^{p}}{F^{o}(x)^{\beta}}\;\mathrm{d}x\notag\\
    &\leq\frac{1}{\|\omega\|_{q}^{q(1-\frac{\beta}{N})}}\int\limits_{\R^{N}}\frac{\exp(\lambda(1-\frac{\beta}{N})\lvert \omega\rvert^{\frac{N}{N-1}})\lvert \omega\rvert^{p}}{F^{o}(x)^{\beta}}\;\mathrm{d}x.\notag
\end{align}
Therefore, $\|\omega\|_{q}=1$ and
$$
\mathrm{ATMSC}(N,p,q,\lambda,\beta)=\int\limits_{\R^{N}}\frac{\exp(\lambda(1-\frac{\beta}{N})\lvert \omega\rvert^{\frac{N}{N-1}})\lvert \omega\rvert^{p}}{F^{o}(x)^{\beta}}\;\mathrm{d}x.
$$
Moreover, $\omega$ is a maximizer of $\mathrm{ATMSC}(N,p,q,\lambda,\beta)$ with $\|F(\nabla\omega)\|_{N}\leq1$.

Finally, we prove that $\|F(\nabla\omega)\|_{N}=1$ when $\omega$ is a maximizer of $\mathrm{ATMSC}(N,p,q,\lambda,\beta)$. We assume $\|F(\nabla\omega)\|_{N}=\gamma^{-1}$ with $\gamma\geq1$, where we can easily exclude the case $\|F(\nabla u)\|_{N}=0$. Let
$$
v(x)=\gamma\omega(\gamma^{\frac{q}{N}}x).
$$
Obviously,
$$
\|F(\nabla v)\|_{N}=1,\;\|v\|_{q}=\|\omega\|_{q}
$$
and
\begin{align}
    &\frac{1}{\|v\|_{q}^{q(1-\frac{\beta}{N})}}\int\limits_{\R^{N}}\frac{\exp(\lambda(1-\frac{\beta}{N})\lvert v\rvert^{\frac{N}{N-1}})\lvert v\rvert^{p}}{F^{o}(x)^{\beta}}\;\mathrm{d}x\notag\\
    =&\gamma^{p-q(1-\frac{\beta}{N})}\frac{1}{\|\omega\|_{q}^{q(1-\frac{\beta}{N})}}\int\limits_{\R^{N}}\frac{\exp(\gamma^{\frac{N}{N-1}}\lambda(1-\frac{\beta}{N})\lvert \omega\rvert^{\frac{N}{N-1}})\lvert \omega\rvert^{p}}{F^{o}(x)^{\beta}}\;\mathrm{d}x\notag\\
    \geq&\frac{1}{\|\omega\|_{q}^{q(1-\frac{\beta}{N})}}\int\limits_{\R^{N}}\frac{\exp(\lambda(1-\frac{\beta}{N})\lvert \omega\rvert^{\frac{N}{N-1}})\lvert \omega\rvert^{p}}{F^{o}(x)^{\beta}}\;\mathrm{d}x\notag\\
    =&\mathrm{ATMSC}(N,p,q,\lambda,\beta)\notag.
\end{align}
Therefore, $\gamma=1$.

Maximizers for $\mathrm{ATMSC}(N,q,\lambda,\beta)$ can be proved similarly.
\end{proof}
\subsection{Proof of Theorem \ref{th1.2}}
\begin{proof}
Let $u$ be a maximizer of
$$
\mathrm{ATMSC}(N,p,q,\lambda,\beta)=\sup\limits_{\|F(\nabla u)\|_{N}\leq 1}\frac{1}{\|u\|_{q}^{q(1-\frac{\beta}{N})}}\int\limits_{\R^{N}}\frac{\exp(\lambda(1-\frac{\beta}{N})\lvert u\rvert^{\frac{N}{N-1}})\lvert u\rvert^{p}}{F^{o}(x)^{\beta}}\;\mathrm{d}x.
$$
Set $v=\lvert u\rvert^{\star}$, by Lemma \ref{le2.2} and Lemma \ref{le2.3}, we have $\|F(\nabla v)\|_{N}\leq1$, $\|u\|_{q}=\|v\|_{q}$ and
$$
\int\limits_{\R^{N}}\frac{\exp(\lambda(1-\frac{\beta}{N})\lvert u\rvert^{\frac{N}{N-1}})\lvert u\rvert^{p}}{F^{o}(x)^{\beta}}\;\mathrm{d}x\leq\int\limits_{\R^{N}}\frac{\exp(\lambda(1-\frac{\beta}{N})v^{\frac{N}{N-1}})v^{p}}{F^{o}(x)^{\beta}}\;\mathrm{d}x.
$$
By the fact that $u$ is a maximizer of $\mathrm{ATMSC}(N,p,q,\lambda,\beta)$, we conclude
$$
\int\limits_{\R^{N}}\frac{\exp(\lambda(1-\frac{\beta}{N})\lvert u\rvert^{\frac{N}{N-1}})\lvert u\rvert^{p}}{F^{o}(x)^{\beta}}\;\mathrm{d}x=\int\limits_{\R^{N}}\frac{\exp(\lambda(1-\frac{\beta}{N})v^{\frac{N}{N-1}})v^{p}}{F^{o}(x)^{\beta}}\;\mathrm{d}x.
$$
Then by the condition for equality to hold in Lemma \ref{le2.3},
$$
\exp(\lambda(1-\frac{\beta}{N})\lvert u\rvert^{\frac{N}{N-1}})\lvert u\rvert^{p}=\exp(\lambda(1-\frac{\beta}{N})v^{\frac{N}{N-1}})v^{p},
$$
which means $\lvert u\rvert=v=\lvert u\rvert^{\star}$.

Maximizers for $\mathrm{ATMSC}(N,q,\lambda,\beta)$ can be proved similarly.
\end{proof}
\section{Proof of Theorem \ref{th1.3}}\label{section4}

For every $a>0$, $b>0$, $q>1$ and $0\leq\beta<N$, let
$$
f(\lambda):=\mathrm{ATMSC}(N,q,\lambda,\beta)
$$
and
$$
g_{a,b}(\lambda):=\Lambda_{a,b}(N,q,\lambda,\beta).
$$
By \eqref{AA},
$$
g_{a,b}(\lambda)=\sup\limits_{t\in(0,\lambda)}\left(\frac{1-(\frac{t}{\lambda})^{a\frac{N-1}{N}}}{(\frac{t}{\lambda})^{b\frac{N-1}{N}}}\right)^{\frac{q}{b}(1-\frac{\beta}{N})}f(t).
$$

\begin{lemma}\label{le4.1}
$f$ is continuous on $(0,\lambda_{N})$.
\end{lemma}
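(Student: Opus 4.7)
The plan is to combine the evident monotonicity of $f$ with separate one-sided continuity arguments at an arbitrary $\lambda_0\in(0,\lambda_N)$.

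\textbf{Monotonicity and left-continuity.} Since each Taylor coefficient of $\lambda\mapsto\Phi_{N,q,\beta}(\lambda(1-\beta/N)s)$ is nonnegative, $f$ is nondecreasing on $(0,\lambda_N)$, so its one-sided limits at $\lambda_0$ exist and satisfy $\lim_{\lambda\to\lambda_0^-}f(\lambda)\leq f(\lambda_0)\leq\lim_{\lambda\to\lambda_0^+}f(\lambda)$. For the left-continuity, fix $\epsilon>0$ and use Lemma~\ref{le2.5} to pick $u_\epsilon$ with $\|F(\nabla u_\epsilon)\|_N=\|u_\epsilon\|_q=1$ such that
$$\int_{\R^N}\frac{\Phi_{N,q,\beta}(\lambda_0(1-\beta/N)|u_\epsilon|^{N/(N-1)})}{F^o(x)^\beta}\,\mathrm{d}x\geq f(\lambda_0)-\epsilon.$$
As $\lambda_n\uparrow\lambda_0$, the integrand at $\lambda_n$ increases monotonically to its $\lambda_0$-value, so by the monotone convergence theorem $\liminf_n f(\lambda_n)\geq f(\lambda_0)-\epsilon$; letting $\epsilon\to 0$ yields left-continuity.

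\textbf{Right-continuity.} For $\lambda_n\downarrow\lambda_0$, fix $\lambda^*\in(\lambda_0,\lambda_N)$ with $\lambda_n\leq\lambda^*$ for all large $n$, and invoke Theorem~\ref{th1.1} to obtain a nonnegative, Wulff-symmetric maximizer $u_n$ of $f(\lambda_n)$ with $\|F(\nabla u_n)\|_N=\|u_n\|_q=1$. Since $\{u_n\}$ is bounded in $D^{N,q}(\R^N)$, along a subsequence $u_n\rightharpoonup u^*$ weakly in $D^{N,q}$, strongly in $L^N$ and in $L^i_{\mathrm{loc}}$ for $i\in(0,\infty)$, and a.e.\ in $\R^N$, so $\|F(\nabla u^*)\|_N\leq 1$ and $\|u^*\|_q\leq 1$. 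I then plan to show
$$f(\lambda_n)=\int_{\R^N}\frac{\Phi_{N,q,\beta}(\lambda_n(1-\beta/N)|u_n|^{N/(N-1)})}{F^o(x)^\beta}\,\mathrm{d}x\;\longrightarrow\;\int_{\R^N}\frac{\Phi_{N,q,\beta}(\lambda_0(1-\beta/N)|u^*|^{N/(N-1)})}{F^o(x)^\beta}\,\mathrm{d}x$$
by reproducing the tail/compact-set splitting of Cases~1--3 in the proof of Theorem~\ref{th1.1}, uniformly in $\lambda_n$: the Wulff-symmetric a priori bound $u_n(x)\leq\epsilon_R$ on $\{F^o(x)\geq R\}$ combined with Theorem~\ref{tha} at the fixed parameter $\lambda^*$ controls the tail integral independently of $n$, while on the ball $W_R$ the integrands converge a.e.\ and are uniformly bounded in $L^{1+\theta}$ for $\theta$ small with $(1+\theta)\lambda^*<\lambda_N$, so that Lemma~\ref{le2.4} delivers convergence. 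Because $\|F(\nabla u^*)\|_N\leq 1$ and $\|u^*\|_q\leq 1$, the limiting integral is at most $\|u^*\|_q^{q(1-\beta/N)}f(\lambda_0)\leq f(\lambda_0)$; together with $\liminf_n f(\lambda_n)\geq f(\lambda_0)$ from monotonicity this gives $f(\lambda_n)\to f(\lambda_0)$.

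\textbf{Main obstacle.} The technical heart is making the tail and compact-set estimates uniform in $n$ while $\lambda_n$ varies. This is achieved by dominating every $\lambda_n$ by the fixed $\lambda^*<\lambda_N$ and applying Theorem~\ref{tha} at $\lambda^*$, so that all constants depend only on $\lambda^*$ and not on the particular $\lambda_n$. A secondary point in the singular case $\beta>0$ is that one needs $\beta(1+\theta)<N$ to justify the equi-integrability on $W_R$; this is arranged by taking $\theta$ sufficiently small alongside the constraint $(1+\theta)\lambda^*<\lambda_N$.
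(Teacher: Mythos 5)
Your left-continuity argument (monotone convergence applied to a fixed near-maximizer of $f(\lambda_0)$) is correct and, if anything, cleaner than what is needed. The problem is the right-continuity step, where you take exact maximizers $u_n$ of $f(\lambda_n)$, pass to a weak limit $u^*$, and claim that the full integrals converge to $\int_{\R^N}\Phi_{N,q,\beta}(\lambda_0(1-\tfrac{\beta}{N})\lvert u^*\rvert^{\frac{N}{N-1}})F^{o}(x)^{-\beta}\,\mathrm{d}x$ by "reproducing Cases 1--3" of Theorem \ref{th1.1}. This fails in the case $\beta=0$ with $\frac{q(N-1)}{N}\in\N$: there the lowest-order term of $\Phi_{N,q,0}$ is exactly $\frac{t^{q(N-1)/N}}{(q(N-1)/N)!}$, which contributes $\frac{\lambda_n^{j_0}}{j_0!}\lvert u_n\rvert^{q}$ to the integrand, and the tail $\int_{\R^N\setminus W_R}\lvert u_n\rvert^{q}\,\mathrm{d}x$ is \emph{not} small uniformly in $n$ --- the $L^q$ mass can escape to infinity, so the pointwise bound $u_n\leq\epsilon_R$ gives no gain (there is no factor $\epsilon_R^{p-q}$ with $p>q$) and Theorem \ref{tha} only yields boundedness, not smallness, of that tail. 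This is precisely why Case 2 of the proof of Theorem \ref{th1.1} does not prove convergence of the full integral but instead isolates $\|u_n\|_q^q=1$ via the identity $e^t=(e^t-1)+1$ and ends with an inequality containing the additive constant $1$. Your argument can be repaired (subtract the lowest-order term, whose integral equals $\lambda_n^{j_0}/j_0!\to\lambda_0^{j_0}/j_0!$ exactly, treat the remainder as in the other cases, and check at the end that $\lambda_0^{j_0}/j_0!\leq f(\lambda_0)$ so the resulting bound is still $\leq f(\lambda_0)$), but as written the key convergence claim is unjustified in exactly the delicate case. A second, minor point: the final inequality $\int\Phi(\lambda_0\cdots|u^*|\cdots)\leq\|u^*\|_q^{q(1-\beta/N)}f(\lambda_0)$ presupposes $u^*\neq0$, which you should rule out (e.g.\ via $f(\lambda_n)\geq f(\lambda_0)>0$).

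The paper avoids all of this with a much lighter device: it evaluates the functional at the parameters $\lambda+\varepsilon_n$ and $\lambda$ on the \emph{same} maximizer $u_n$, so that $0\leq f(\lambda+\varepsilon_n)-f(\lambda)$ is bounded by the integral of the difference $\Phi_{N,q,\beta}((\lambda+\varepsilon_n)(1-\tfrac{\beta}{N})\lvert u_n\rvert^{\frac{N}{N-1}})-\Phi_{N,q,\beta}(\lambda(1-\tfrac{\beta}{N})\lvert u_n\rvert^{\frac{N}{N-1}})$. On the tail, where $\lvert u_n\rvert\leq1$, this difference is at most $\bigl[\Phi_{N,q,\beta}((\lambda+\varepsilon_n)(1-\tfrac{\beta}{N}))-\Phi_{N,q,\beta}(\lambda(1-\tfrac{\beta}{N}))\bigr]\lvert u_n\rvert^{j_{N,q,\beta}\frac{N}{N-1}}$, i.e.\ a coefficient tending to $0$ times a uniformly bounded integral --- no smallness of the tail itself is ever needed, and no weak limits or loss-of-compactness issues arise. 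You should either adopt that comparison trick or carry out the repair of the exceptional case indicated above.
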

\begin{proof}
Let $\varepsilon_{n}\rightarrow0^{+}$ and $u_{n}\in D^{N,q}(\R^{N})$ be Wulff symmetric with
$$
\|F(\nabla u_{n})\|_{N}=1,\;\|u_{n}\|_{q}=1,
$$
$$
f(\lambda+\varepsilon_{n})=\int\limits_{\R^{N}}\frac{\Phi_{N,q,\beta}((\lambda+\varepsilon_{n})(1-\frac{\beta}{N})\lvert u_{n}\rvert^{\frac{N}{N-1}})}{F^{o}(x)^{\beta}}\;\mathrm{d}x.
$$
Notice that
$$
0\leq f(\lambda+\varepsilon_{n})-f(\lambda)\leq\int\limits_{\R^{N}}\frac{\Phi_{N,q,\beta}((\lambda+\varepsilon_{n})(1-\frac{\beta}{N})\lvert u_{n}\rvert^{\frac{N}{N-1}})-\Phi_{N,q,\beta}(\lambda(1-\frac{\beta}{N})\lvert u_{n}\rvert^{\frac{N}{N-1}})}{F^{o}(x)^{\beta}}\;\mathrm{d}x
$$
and for any $x\not=0$,
\begin{align}
    \|u_{n}\|_{q}^{q}\geq\int_{0}^{F^{o}(x)}u_{n}(r)^{q}\int_{\partial W_{r}}\frac{1}{\lvert\nabla F^{o}(x)\rvert}\;\mathrm{d}\sigma\mathrm{d}r\geq\kappa_{N}F^{o}(x)^{N}u_{n}(F^{o}(x))^{q},\notag
\end{align}
which implys there exists $R>0$ sufficiently large such that $\lvert u_{n}(x)\rvert\leq1$ when $F^{o}(x)\geq R$. Then by Theorem \ref{tha},
\begin{align}
    &\int\limits_{\R^{N}\backslash W_{R}}\frac{\Phi_{N,q,\beta}((\lambda+\varepsilon_{n})(1-\frac{\beta}{N})\lvert u_{n}\rvert^{\frac{N}{N-1}})-\Phi_{N,q,\beta}(\lambda(1-\frac{\beta}{N})\lvert u_{n}\rvert^{\frac{N}{N-1}})}{F^{o}(x)^{\beta}}\;\mathrm{d}x\notag\\
    \leq&\left[\Phi_{N,q,\beta}((\lambda+\varepsilon_{n})(1-\frac{\beta}{N}))-\Phi_{N,q,\beta}(\lambda(1-\frac{\beta}{N}))\right]\int\limits_{\R^{N}\backslash W_{R}}\frac{\lvert u_{n}\rvert^{j_{N,q,\beta}\frac{N}{N-1}}}{F^{o}(x)^{\beta}}\;\mathrm{d}x\rightarrow0,\notag
\end{align}
where
$$
j_{N,q,\beta}=\left\{\begin{array}{cc}
\left\lfloor\frac{q(N-1)}{N}(1-\frac{\beta}{N})\right\rfloor+1 & {\mathrm{if}}\;\beta>0 \\
\left\lceil\frac{q(N-1)}{N}\right\rceil & {\mathrm{if}}\;\beta=0
\end{array}\right..
$$
On the other hand,
$$
\frac{\Phi_{N,q,\beta}((\lambda+\varepsilon_{n})(1-\frac{\beta}{N})\lvert u_{n}\rvert^{\frac{N}{N-1}})}{F^{o}(x)^{\beta}}\rightarrow\frac{\Phi_{N,q,\beta}(\lambda(1-\frac{\beta}{N})\lvert u_{n}\rvert^{\frac{N}{N-1}})}{F^{o}(x)^{\beta}}\;\mathrm{a.e.}\;\mathrm{in}\;W_{R}
$$
and by Lemma 2.1 in \cite{Yang-2012} and Theorem \ref{tha}, there exists $\theta>0$ sufficiently small such that $\frac{\Phi_{N,q,\beta}((\lambda+\varepsilon_{n})(1-\frac{\beta}{N})\lvert u_{n}\rvert^{\frac{N}{N-1}})}{F^{o}(x)^{\beta}}$ is uniformly bounded in $L^{1+\theta}(W_{R})$. Thus, by Lemma \ref{le2.4},
$$
\int\limits_{W_{R}}\frac{\Phi_{N,q,\beta}((\lambda+\varepsilon_{n})(1-\frac{\beta}{N})\lvert u_{n}\rvert^{\frac{N}{N-1}})}{F^{o}(x)^{\beta}}\;\mathrm{d}x\rightarrow\int\limits_{W_{R}}\frac{\Phi_{N,q,\beta}(\lambda(1-\frac{\beta}{N})\lvert u_{n}\rvert^{\frac{N}{N-1}})}{F^{o}(x)^{\beta}}\;\mathrm{d}x.
$$
Hence, $f(\lambda+\varepsilon_{n})-f(\lambda)\rightarrow0^{+}$ and we can obtain $f(\lambda-\varepsilon_{n})-f(\lambda)\rightarrow0^{-}$ similarly. Therefore, $f$ is continuous on $(0,\lambda_{N})$.
\end{proof}

Using the lemma, we give the proof of Theorem \ref{th1.3}.
\begin{proof}
It is enough to show
\begin{align}\label{eq4.1}
    \varlimsup\limits_{t\rightarrow0}\left(\frac{1-(\frac{t}{\lambda})^{a\frac{N-1}{N}}}{(\frac{t}{\lambda})^{b\frac{N-1}{N}}}\right)^{\frac{q}{b}(1-\frac{\beta}{N})}f(t)<g_{a,b}(\lambda)
\end{align}
and
\begin{align}\label{eq4.2}
    \varlimsup\limits_{t\rightarrow\lambda}\left(\frac{1-(\frac{t}{\lambda})^{a\frac{N-1}{N}}}{(\frac{t}{\lambda})^{b\frac{N-1}{N}}}\right)^{\frac{q}{b}(1-\frac{\beta}{N})}f(t)<g_{a,b}(\lambda).
\end{align}
Indeed, if \eqref{eq4.1} and \eqref{eq4.2} are true, then there is $t_{\lambda}\in(0,\lambda)$ such that
$$
\left(\frac{1-(\frac{t_{\lambda}}{\lambda})^{a\frac{N-1}{N}}}{(\frac{t_{\lambda}}{\lambda})^{b\frac{N-1}{N}}}\right)^{\frac{q}{b}(1-\frac{\beta}{N})}f(t_{\lambda})=g_{a,b}(\lambda).
$$
Let $u_{\lambda}$ be a maximizer of $\mathrm{ATMSC}(N,q,t_{\lambda},\beta)$ and set
$$
v_{\lambda}(x):=\left(\frac{t_{\lambda}}{\lambda}\right)^{\frac{N-1}{N}}u_{\lambda}(\gamma x),
$$
where
$$
\gamma=\left(\frac{(\frac{t_{\lambda}}{\lambda})^{b\frac{N-1}{N}}\|u_{\lambda}\|_{q}^{b}}{1-(\frac{t_{\lambda}}{\lambda})^{a\frac{N-1}{N}}}\right)^{\frac{q}{Nb}}.
$$
Then $v_{\lambda}$ is a maximizer of $\Lambda_{a,b}(N,q,\lambda,\beta)$. On the other hand, let $v_{\lambda}$ be a maximizer of $\Lambda_{a,b}(N,q,\lambda,\beta)$ and set
$$
u_{\lambda}(x):=\frac{v_{\lambda}(x)}{\|F(\nabla v_{\lambda})\|_{N}}.
$$
Then $u_{\lambda}$ is a maximizer of $\mathrm{ATMSC}(N,q,\lambda\|F(\nabla v_{\lambda})\|_{N}^{\frac{N}{N-1}},\beta)$.

Now, we show \eqref{eq4.1} and \eqref{eq4.2}. Firstly, since
$$
\varlimsup\limits_{t\rightarrow\lambda}\left(\frac{1-(\frac{t}{\lambda})^{a\frac{N-1}{N}}}{(\frac{t}{\lambda})^{b\frac{N-1}{N}}}\right)^{\frac{q}{b}(1-\frac{\beta}{N})}f(t)=0,
$$
\eqref{eq4.2} is obvious. Thus, we only need to study
$$
\varlimsup\limits_{t\rightarrow0}\left(\frac{1-(\frac{t}{\lambda})^{a\frac{N-1}{N}}}{(\frac{t}{\lambda})^{b\frac{N-1}{N}}}\right)^{\frac{q}{b}(1-\frac{\beta}{N})}f(t).
$$
Let $t_{n}\rightarrow0^{+}$ and $u_{n}\in D^{N,q}(\R^{N})$ be Wulff symmetric with
$$
\|F(\nabla u_{n})\|_{N}=1,\;\|u_{n}\|_{q}=1,
$$
$$
f(t_{n})=\int\limits_{\R^{N}}\frac{\Phi_{N,q,\beta}(t_{n}(1-\frac{\beta}{N})\lvert u_{n}\rvert^{\frac{N}{N-1}})}{F^{o}(x)^{\beta}}\;\mathrm{d}x.
$$

{\bfseries Case 1:} $\beta>0$.

Notice that
\begin{align}
    f(t_{n})&=\sum\limits_{j=j_{N,q,\beta}}^{+\infty}\int\limits_{\R^{N}}\frac{t_{n}^{j}(1-\frac{\beta}{N})^{j}\lvert u_{n}\rvert^{\frac{N}{N-1}j}}{j!F^{o}(x)^{\beta}}\;\mathrm{d}x\notag\\
    &=\int\limits_{\R^{N}}\frac{t_{n}^{j_{N,q,\beta}}(1-\frac{\beta}{N})^{j_{N,q,\beta}}\lvert u_{n}\rvert^{\frac{N}{N-1}j_{N,q,\beta}}}{j_{N,q,\beta}!F^{o}(x)^{\beta}}\;\mathrm{d}x+\sum\limits_{j=j_{N,q,\beta}+1}^{+\infty}\int\limits_{\R^{N}}\frac{t_{n}^{j}(1-\frac{\beta}{N})^{j}\lvert u_{n}\rvert^{\frac{N}{N-1}j}}{j!F^{o}(x)^{\beta}}\;\mathrm{d}x\notag\\
    &\leq\frac{t_{n}^{j_{N,q,\beta}}}{j_{N,q,\beta}!}\int\limits_{\R^{N}}\frac{\lvert u_{n}\rvert^{\frac{N}{N-1}j_{N,q,\beta}}}{F^{o}(x)^{\beta}}\;\mathrm{d}x+t_{n}^{j_{N,q,\beta}+1}\int\limits_{\R^{N}}\frac{\exp(t_{n}\lvert u_{n}\rvert^{\frac{N}{N-1}})\lvert u_{n}\rvert^{\frac{N}{N-1}(j_{N,q,\beta}+1)}}{F^{o}(x)^{\beta}}\;\mathrm{d}x.\notag
\end{align}
Then by Theorem \ref{tha},
\begin{align}
    \left(\frac{1-(\frac{t_{n}}{\lambda})^{a\frac{N-1}{N}}}{(\frac{t_{n}}{\lambda})^{b\frac{N-1}{N}}}\right)^{\frac{q}{b}(1-\frac{\beta}{N})}f(t_{n})\rightarrow0.\notag
\end{align}

{\bfseries Case 2:} $\beta=0$ and $\frac{q(N-1)}{N}\not\in\N$.

Similarly as in the first case.

{\bfseries Case 3:} $\beta=0$ and $\frac{q(N-1)}{N}\in\N$.

By Theorem \ref{tha} and Lemma \ref{le2.4},
$$
\int\limits_{\R^{N}}\exp(t_{n}\lvert u_{n}\rvert^{\frac{N}{N-1}})\lvert u_{n}\rvert^{q}\;\mathrm{d}x\rightarrow1.
$$
Then
\begin{align}
    &\left(\frac{1-(\frac{t_{n}}{\lambda})^{a\frac{N-1}{N}}}{(\frac{t_{n}}{\lambda})^{b\frac{N-1}{N}}}\right)^{\frac{q}{b}}f(t_{n})\notag\\
    \leq&\left(\frac{1-(\frac{t_{n}}{\lambda})^{a\frac{N-1}{N}}}{(\frac{t_{n}}{\lambda})^{b\frac{N-1}{N}}}\right)^{\frac{q}{b}}\frac{t_{n}^{\frac{q(N-1)}{N}}}{\left(\frac{q(N-1)}{N}\right)!}\int\limits_{\R^{N}}\exp(t_{n}\lvert u_{n}\rvert^{\frac{N}{N-1}})\lvert u_{n}\rvert^{q}\;\mathrm{d}x\notag\\
    \leq&\frac{\lambda^{\frac{q(N-1)}{N}}}{\left(\frac{q(N-1)}{N}\right)!}.\notag
\end{align}
Thus,
$$
\varlimsup\limits_{t\rightarrow0}\left(\frac{1-(\frac{t}{\lambda})^{a\frac{N-1}{N}}}{(\frac{t}{\lambda})^{b\frac{N-1}{N}}}\right)^{\frac{q}{b}(1-\frac{\beta}{N})}f(t)\leq\frac{\lambda^{\frac{q(N-1)}{N}}}{\left(\frac{q(N-1)}{N}\right)!}.
$$
Hence, \eqref{eq4.1} holds when
$$
g_{a,b}(\lambda)>\frac{\lambda^{\frac{q(N-1)}{N}}}{\left(\frac{q(N-1)}{N}\right)!}.
$$
Therefore, $\Lambda_{a,b}(N,q,\lambda, 0)$ is not attained only if
$$
g_{a,b}(\lambda)\leq\frac{\lambda^{\frac{q(N-1)}{N}}}{\left(\frac{q(N-1)}{N}\right)!}.
$$
\end{proof}

\noindent{\bf Acknowledgements}  This work was discussed and completed during the second author's visit to Central China Normal University. We would like to express our gratitude to Central China Normal University for providing a conductive learning atmosphere and environment.\\

\def\refname{References }

\end{document}